\definecolor{e-mail}{rgb}{0,.40,.80}
\definecolor{reference}{rgb}{.20,.60,.22}
\definecolor{citation}{rgb}{0,.40,.80}
\definecolor{todo}{rgb}{1,0,0}
\definecolor{answer}{rgb}{0,0,1}
\definecolor{new}{rgb}{1,0,1}
\definecolor{conditional}{rgb}{0,1,0}
\definecolor{e-mail}{rgb}{0,.40,.80}
\definecolor{reference}{rgb}{.20,.60,.22}
\definecolor{mrnumber}{rgb}{.80,.40,0}
\definecolor{citation}{rgb}{0,.40,.80}
\newtheorem{proposition}{Proposition}[section]
\newtheorem{corollary}{Corollary}[section]
\newtheorem{theorem}{Theorem}[section]
\newtheorem{lemma}{Lemma}[section]
\theoremstyle{remark}
\theoremstyle{definition}
\newtheorem{ex}{Example}[section]
\newtheorem{de}{Definition}[section]
\DeclareMathOperator{\lc}{lc}
\DeclareMathOperator{\Char}{char}
\newcommand{\Z}{\mathbb{Z}}
\title{Commuting planar polynomial vector fields for conservative Newton systems}
\author{Joel Nagloo}
\address{CUNY Bronx Community College, Department of Mathematics and Computer Science, 2155 University Avenue
Bronx, NY 10453}
\email{joel.nagloo@bcc.cuny.edu}
\urladdr{http://fsw01.bcc.cuny.edu/joel.nagloo/}
\author{Alexey Ovchinnikov}
\address{CUNY Queens College, Department of Mathematics,
65-30 Kissena Blvd, Queens, NY 11367 and
CUNY Graduate Center, Ph.D. programs in Mathematics and Computer Science, 365 Fifth Avenue,
New York, NY 10016}
\email{aovchinnikov@qc.cuny.edu}
\urladdr{http://qc.edu/~aovchinnikov/}
\author{Peter Thompson}
\address{CUNY Graduate Center, Ph.D. program in Mathematics, 365 Fifth Avenue,
New York, NY 10016}
\email{pthompson@gradcenter.cuny.edu}
\urladdr{http://peterthompsonmath.wordpress.com}
\begin{document}

\begin{abstract}
We study the problem of characterizing polynomial vector fields that commute with a given polynomial vector field on a plane. It is a classical result that one can write down solution formulas for an ODE that corresponds to a planar vector field that possesses a linearly independent  commuting vector field. This problem is also central to the question of linearizability of vector fields. 
 Let $f \in K[x]$, where $K$ is a field of characteristic zero, and  $d$  the  derivation that corresponds to the differential equation $\ddot x = f(x)$ in a standard way. Let also $H$ be the Hamiltonian polynomial for $d$, that is $H=\frac{1}{2}y^2-\int{f(x)dx}$. It is known that the set of all polynomial derivations that commute with $d$ forms a $K[H]$-module $M_d$.
In this paper, we show that, for every such $d$, the module $M_d$ is of rank $1$ if and only if $\deg f\geqslant 2$. For example, the classical elliptic equation $\ddot x = 6x^2+a$, where $a \in \mathbb{C}$, falls into this category.
\end{abstract}
\keywords{Polynomial vector fields, integrability, 
Hamiltonian derivations, conservative Newton systems}
\maketitle

\section{Introduction}
We study the problem of characterizing polynomial vector fields that commute with a given polynomial vector field on a plane. It is a classical result that one can write down solution formulas for an ODE that corresponds to a planar vector field that possesses a linearly independent (transversal) commuting vector field  (see Theorem~\ref{thm:classicalint}). This problem is also central to the question of linearizability of vectors fields (cf. \cite{Grau06} and \cite{Sabatini97}). In what follows, we will use the standard correspondence between (polynomial) vector fields and derivations on (polynomial) rings. 
Let 
\begin{equation}\label{eq:cns}d = y\frac{\partial}{\partial x} +  f(x)\frac{\partial}{\partial y}\end{equation}
be a derivation, where $f$ is a polynomial with coefficients in a field $K$ of zero characteristic.  This derivation corresponds to a conservative Newton system, and so to the differential equation $\ddot x = f(x)$. %
Let $H$ be the Hamiltonian polynomial for $d$, that is $H=\frac{1}{2}y^2-\int{f(x)dx}$. Then the set of all polynomial derivations that commute with $d$ forms a $K[H]$-module $M_d$ \cite[Corollary 7.1.5]{nowicki94}. 
In this paper, we show that, for every such $d$, the module $M_d$ is of rank $1$ if and only if $\deg f\geqslant 2$. For example, the classical elliptic equation $\ddot x = 6x^2+a$, where $a \in \mathbb{C}$, falls into this category.

A characterization of commuting planar derivations in terms of a common Darboux polynomial is given by Petravchuk \cite{petravchuk09}. This was generalized to higher dimensions in \cite{LiDu2012} by Li and Du. In \cite{Guha13}, Choudhury and Guha used Darboux polynomials to find linearly independent commuting vector fields and to construct linearizations of the vector fields.
In the case in which $K$ is the real numbers, our result generalizes a result on conservative Newton systems with a center to the case in which a center may or may not be present.  A vector field has a center at point $P$ if there is a punctured neighborhood of $P$ in which every solution curve is a closed loop.  A center is called isochronous if every such loop has the same period.  It was proven by Villarini \cite[Theorem~4.5]{Villarini92} that, if $D_1$ and $D_2$ are commuting vector fields orthogonal at noncritical points, then any center of $D_1$ is isochronous.  The hypothesis of this result can be relaxed to the case in which $D_2$ is transversal to $D_1$ at noncritical points (cf. \cite[Theorem, p. 92]{Sabatini97}).  In light of this result, one approach to showing the nonexistence of a vector field commuting with $D$ is to show that $D$ has a non-isochronous center.  In fact, Amel'kin \cite[Theorem~11]{amelkin77} has shown that if the system of ordinary differential equations (ODEs) corresponding to derivation~\eqref{eq:cns} is not linear and has a center at the origin, then there is no transversal vector field that commutes with $d$.  

As far as we are aware, there has not been a standard method to show the nonexistence of a transversal polynomial vector field in the absence of a nonisochronous center.  We develop our own method to do this, which includes building a triangular system of differential equations.  One technique we use in approaching this system involves constructing a family of pairs of commuting derivations on rings of the form $K[x^{1/t},x^{-1/t},y]$ (see Lemma~\ref{lem:positive}) and using recurrence relations.

It is impossible to remove the condition  $\deg f\geqslant 2$ from the statement of our main result, as every non-zero derivation of degree less than $2$ commutes with another transversal derivation (see Proposition~\ref{prop:linear}).  The form of $d$ in our main result implies that $d$ is divergence free (which is the same as Hamiltonian in the planar case).
It is not possible to strengthen our result to the case in which $d$ is merely assumed to be divergence free of degree at least $2$, as shown in Example~\ref{ex:nonlinear} and Proposition~\ref{prop:divfree}.

The paper is organized as follows. We introduce the basic terminology in Section~\ref{sec:defs}. The main result, Theorem~\ref{thm:main}, is stated and proven in Section~\ref{sec:main}.
\section{Basic terminology and related results}\label{sec:defs}
We direct the reader to \cite{Kap,Kol} for the basics of a ring with a derivation.
\begin{de}
An $S$-derivation on a commutative ring $R$ with subring $S$ is a map $d\colon R \to R$ such that $d(S) = 0$ and for all $a$, $b \in R$,
$$
d(a+b)=d(a)+d(b)\quad\text{and}\quad d(ab)=d(a)\cdot b+a\cdot d(b).
$$
\end{de}
\begin{de}Let $K$ be a field.  A non-zero $K$-derivation $d$ on $K[x_1,\ldots,x_n]$ is called {\em integrable} if there exist commuting $K$-derivations $\delta_1,\ldots,\delta_{n-1}$ on $K[x_1,\ldots,x_n]$ 
that are linearly independent from $d$ over $K(x_1,\ldots,x_n)$, and commute with $d$,  that is, for all $a \in K[x_1,\ldots,x_n]$ and $i$,$j$, $1\leqslant i,j\leqslant n-1$, 
$$d(\delta_i(a)) = \delta_i(d(a))\quad\text{and}\quad \delta_i(\delta_j(a))=\delta_j(\delta_i(a)).$$ 
\end{de}
The following is a classical result.
\begin{theorem}\label{thm:classicalint}
Let $d$ and $\delta$ be $\mathbb{R}$-derivations on $\mathbb{R}(x,y)$ defined by \[d(x) = f_1(x,y),\ d(y) = f_2(x,y),\ \delta(x) = g_1(x,y),\ \delta(y) = g_2(x,y).\]  Let $(x_0,y_0) \in \mathbb{R}^2$.  Suppose that $d$ and $\delta$ commute and there is no $(\lambda_1,\lambda_2)\in\mathbb{R}^2\backslash \{(0,0)\}$ such that \[\lambda_1\begin{pmatrix}f_1(x_0,y_0) \\ f_2(x_0,y_0)\end{pmatrix} = \lambda_2\begin{pmatrix}g_1(x_0,y_0) \\ g_2(x_0,y_0)\end{pmatrix}.\]  Then the initial value problem \[\dot{x} = f_1(x,y),\ \dot{y} = f_2(x,y),\ x(0) = x_0,\ y(0) = y_0\] has a solution given by \[(x(t),y(t)) = F^{-1}(t,0),\] where 
\[F\begin{pmatrix}x \\ y\end{pmatrix} = 
\begin{pmatrix}\displaystyle\int\limits_{x_0}^x\tfrac{g_2(r,y)}{\Delta(r,y)}dr + \int\limits_{y_0}^y\tfrac{-g_1(x_0,s)}{\Delta(x_0,s)}ds \\ 
\displaystyle\int\limits_{x_0}^x\tfrac{-f_2(r,y)}{\Delta(r,y)}dr + \int\limits_{y_0}^y\tfrac{f_1(x_0,s)}{\Delta(x_0,s)}ds \end{pmatrix}, \]
and $\Delta(x,y) = f_1(x,y)g_2(x,y)-f_2(x,y)g_1(x,y)$.
\end{theorem}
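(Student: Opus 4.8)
The plan is to show that $F$ is precisely the change of coordinates that simultaneously rectifies both vector fields, sending $d$ to $\partial/\partial u$ and $\delta$ to $\partial/\partial v$; once this is established, the flow of $d$ through $(x_0,y_0)$ can be read off immediately. First I would record that the transversality hypothesis at $(x_0,y_0)$ is exactly the statement that the two column vectors are linearly independent there, so that $\Delta(x_0,y_0)\neq 0$. Since $f_1,f_2,g_1,g_2$ are rational and finite at $(x_0,y_0)$, there is a neighborhood $U$ of $(x_0,y_0)$ on which $\Delta$ is nonvanishing and all integrands defining $F$ are smooth; thus $F\colon U\to\mathbb{R}^2$ is well defined and smooth, with $F(x_0,y_0)=(0,0)$, and the entire argument is local on $U$.

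The crux is the claim that $\Delta$ is a common inverse integrating factor for the two fields, that is
\[
d(\Delta)=\Delta\cdot\operatorname{div}(d),\qquad \delta(\Delta)=\Delta\cdot\operatorname{div}(\delta),
\]
where $\Delta$ is identified with the value $\omega(d,\delta)$ of the area form $\omega=dx\wedge dy$ on the pair $(d,\delta)$. This is where commutativity enters, and it is the step I expect to be the main obstacle. The cleanest derivation uses the Leibniz rule for the Lie derivative, $\mathcal{L}_d\big(\omega(d,\delta)\big)=(\mathcal{L}_d\omega)(d,\delta)+\omega([d,d],\delta)+\omega(d,[d,\delta])$: here $\mathcal{L}_d\omega=\operatorname{div}(d)\,\omega$, the bracket $[d,d]$ vanishes, and $[d,\delta]=0$ by hypothesis, so only the first term survives and gives the first identity; the second follows symmetrically from $[\delta,d]=0$. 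Alternatively, one can verify both identities by expanding $\partial_x,\partial_y$ directly and substituting the two commutation relations $d(g_i)=\delta(f_i)$, at the cost of a longer computation. An equivalent reformulation, which is what I actually need, is that each rescaled field $d/\Delta$ and $\delta/\Delta$ is divergence free on $U$, i.e. $\partial_x(g_1/\Delta)+\partial_y(g_2/\Delta)=0$ and $\partial_x(f_1/\Delta)+\partial_y(f_2/\Delta)=0$.

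Next I would compute the Jacobian $DF$. The fundamental theorem of calculus gives the $x$-derivatives $\partial_x F^1=g_2/\Delta$ and $\partial_x F^2=-f_2/\Delta$ at once, since the second summand of each component is independent of $x$. For the $y$-derivatives I differentiate under the integral sign and then use the divergence-free identities to replace $\partial_y(g_2/\Delta)$ by $-\partial_r(g_1/\Delta)$ inside the first integral; the resulting $r$-integral telescopes against the boundary term $-g_1(x_0,y)/\Delta(x_0,y)$, leaving $\partial_y F^1=-g_1/\Delta$, and symmetrically $\partial_y F^2=f_1/\Delta$. Assembling the four entries shows
\[
DF=\frac{1}{\Delta}\begin{pmatrix} g_2 & -g_1 \\ -f_2 & f_1\end{pmatrix}=\begin{pmatrix} f_1 & g_1 \\ f_2 & g_2\end{pmatrix}^{-1},
\]
so that $DF\cdot d=(1,0)^{\mathsf T}$ and $DF\cdot\delta=(0,1)^{\mathsf T}$ throughout $U$.

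Finally I would invoke the inverse function theorem: since $DF$ is invertible on $U$, the map $F$ is a local diffeomorphism near $(x_0,y_0)$ with inverse $G=F^{-1}$ defined near $(0,0)$. Differentiating $F\circ G=\mathrm{id}$ and using $DF=[\,d\mid\delta\,]^{-1}$ yields $\partial_u G=d\circ G$. Hence the curve $c(t)=F^{-1}(t,0)$ satisfies $c(0)=G(0,0)=(x_0,y_0)$ and $c'(t)=\partial_u G(t,0)=\big(f_1(c(t)),f_2(c(t))\big)$, so $c$ solves the stated initial value problem. This is exactly the asserted solution $(x(t),y(t))=F^{-1}(t,0)$, completing the argument.
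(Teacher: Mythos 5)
The paper states Theorem~\ref{thm:classicalint} without proof---it is introduced as ``a classical result'' and immediately followed by Example~\ref{ex:nonlinear}---so there is no in-paper argument to compare yours against; what you have written is a full proof of the omitted classical fact, and it is correct. Both of your key steps check out. The transversality hypothesis is precisely linear independence of $(f_1,f_2)$ and $(g_1,g_2)$ at $(x_0,y_0)$, hence $\Delta(x_0,y_0)\neq 0$. Commutativity does yield $d(\Delta)=\Delta\cdot\operatorname{div}(d)$ and $\delta(\Delta)=\Delta\cdot\operatorname{div}(\delta)$: your Lie-derivative derivation is the clean route, and the direct expansion using $d(g_i)=\delta(f_i)$ confirms it, e.g.\ $\delta(\Delta)=d(g_1)g_2+f_1\delta(g_2)-d(g_2)g_1-f_2\delta(g_1)=\Delta\,(\partial_x g_1+\partial_y g_2)$ after cancellation. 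Where $\Delta\neq 0$ these identities are indeed equivalent to $d/\Delta$ and $\delta/\Delta$ being divergence free; your Jacobian computation then correctly gives $DF=\bigl[\,d\mid\delta\,\bigr]^{-1}$ (the boundary terms from the second summands cancel exactly as you say, yielding $\partial_y F^1=-g_1/\Delta$ and $\partial_y F^2=f_1/\Delta$), and the inverse-function-theorem step validly converts $F_{*}d=\partial/\partial u$ into the statement that $t\mapsto F^{-1}(t,0)$ solves the initial value problem locally. Two small points to tighten, neither affecting the substance: take $U$ to be an open \emph{rectangle} about $(x_0,y_0)$ on which $\Delta$ is nonvanishing and all four rational functions are pole-free, so that the horizontal segment from $(x_0,y)$ to $(x,y)$ and the vertical segment from $(x_0,y_0)$ to $(x_0,y)$ along which $F$ integrates actually stay in the good region (an arbitrary neighborhood does not guarantee this); and note explicitly that the hypothesis already presupposes $f_i$, $g_i$ are defined at $(x_0,y_0)$, since their values there appear in the transversality condition---this is what licenses your smoothness claims for the integrands.
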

\begin{ex}\label{ex:nonlinear}
Consider the initial value problem \[\dot{x} = 1+x^2, \hspace{5mm} \dot{y} = -2xy, \hspace{5mm} x(0) = x_0, \hspace{5mm} y(0) = y_0,\] where $x_0$ and $y_0$ are real numbers and $y_0 \neq 0$.  The corresponding derivation is \[d(x) = 1+x^2, \hspace{5mm} d(y) = -2xy,\] and we observe that the derivation \[\delta(x) = 0, \hspace{5mm} \delta(y) = y\] commutes with $d$, and that $d$ and $\delta$ are independent at $(x_0,y_0)$.  Using the above formula, we obtain the solution \[x(t) = \tan(t+\tan^{-1}x_0), \hspace{5mm} y(t) = y_0(1+x_0^2)\cos^2(t+\tan^{-1}x_0).\]
\end{ex}

We make some observations, in the form of the following propositions:

\begin{proposition}\label{prop:linear}Let $K$ be a field.  Every non-zero $K$-derivation of degree less than or equal to $1$ on $K[x,y]$ is integrable.\end{proposition}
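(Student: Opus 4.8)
The plan is to recast the statement as a problem in linear algebra. A $K$-derivation $d$ on $K[x,y]$ of degree at most $1$ is determined by affine polynomials $d(x)$ and $d(y)$, so it corresponds to an affine vector field $V(v) = Av + c$ with $A \in M_2(K)$ and $c \in K^2$, where $v = (x,y)^{\mathsf T}$; the hypothesis $d \ne 0$ means $(A,c) \ne (0,0)$. It suffices to produce a commuting transversal $\delta$ that is itself affine, say corresponding to $W(v) = Bv + e$. Under the correspondence between derivations and vector fields, $[d,\delta]$ corresponds to the Lie bracket, and for affine fields a direct computation gives $[V,W] = BV - AW = (BA-AB)v + (Bc - Ae)$. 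Hence $\delta$ commutes with $d$ if and only if $BA = AB$ and $Ae = Bc$, while $\delta$ is linearly independent from $d$ over $K(x,y)$ if and only if the polynomial $\det\bigl[\,Av + c \mid Bv + e\,\bigr]$ is not identically zero. The proposition thus reduces to exhibiting, for each $(A,c)$, a pair $(B,e)$ satisfying these three conditions.

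First I would dispose of the easy cases. If $A = 0$ then $c \ne 0$ and $V$ is a nonzero constant field; taking $W$ to be any constant vector not proportional to $c$ gives a commuting transversal field. If $A = \lambda I$ with $\lambda \ne 0$, every $B$ commutes with $A$, so I would take any non-scalar $B$ and set $e = \lambda^{-1}Bc$; after the translation $u = v + c/\lambda$ one has $V = \lambda u$ and $W = Bu$, and transversality follows because $\det[u \mid Bu] \not\equiv 0$ exactly when $B$ is non-scalar. Next, suppose $A$ is non-scalar and $c \in \operatorname{im} A$ (this includes every invertible $A$). Choosing $e$ with $Ae = c$ and $B = I$, the translation $u = v + e$ turns $V$ into $Au$ and $W$ into $u$, and the key computation $\det[Au \mid u] = -A_{21}u_1^2 + (A_{11}-A_{22})u_1u_2 + A_{12}u_2^2$ shows this quadratic form is not identically zero precisely because $A$ is non-scalar.

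The remaining, and genuinely delicate, case is $A$ non-scalar of rank $1$ with $c \notin \operatorname{im} A$ (note that $c \notin \operatorname{im} A$ forces $A$ to be singular, hence of rank $1$). Here the radial choice $B = I$ is unavailable, since $Ae = c$ has no solution, and one checks that a matrix $pI + qA$ commuting with $A$ is forced to have $p = 0$ once we demand $Bc \in \operatorname{im} A$; this collapses $W$ to a multiple of $V$ plus an element of $\ker A$, so the only genuine freedom left is a constant field in the kernel direction. Accordingly I would take $0 \ne k \in \ker A$ and set $W = k$, that is, $B = 0$ and $e = k$, which commutes with $d$ since $Ak = 0$. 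The main obstacle is transversality, and this is exactly where the hypothesis $c \notin \operatorname{im} A$ does the work: writing $\det[Av + c \mid k] = \det[Av \mid k] + \det[c \mid k]$, if $A$ is not nilpotent then $\operatorname{im} A \ne \ker A$ and the linear term $\det[Av \mid k]$ already fails to vanish, whereas if $A$ is nilpotent then $\ker A = \operatorname{im} A = \operatorname{span}(k)$ kills the linear term but the constant $\det[c \mid k]$ is nonzero precisely because $c \notin \operatorname{span}(k)$. In either situation the determinant is a nonzero polynomial, completing this case and the proof. I would finally remark that no algebraic closure of $K$ is needed: the eigenvalues of a rank-$1$ matrix are $0$ and $\operatorname{tr} A$, both in $K$, and the kernel direction $k$ used above is automatically $K$-rational.
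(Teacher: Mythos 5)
Your proposal is correct, and it reaches the result by a genuinely different organization than the paper's proof. The paper enumerates raw coefficient patterns ($d(x)=ax+by+c$, $d(y)=ex+fy+g$), splitting on conditions like $af-be\neq 0$, and disposes of the degenerate affine cases by linear changes of coordinates that reduce them to a normal form (its Case~4a), where the commuting field is a constant one. You instead derive once and for all the commutation criterion for affine fields, $BA=AB$ and $Bc=Ae$, plus the determinant criterion for transversality, and then split along conjugation-invariant lines: $A$ scalar or not, $c\in\operatorname{im}A$ or not, $A$ nilpotent or not. Your hard case ($A$ of rank $1$, $c\notin\operatorname{im}A$), resolved by the constant field along $\ker A$ with transversality coming from the dichotomy $\operatorname{im}A\neq\ker A$ versus $\operatorname{im}A=\ker A\not\ni c$, is the invariant counterpart of the paper's Case~4, which the paper handles by substitutions like $z=ex-ay$ to reach Case~4a; the two constructions produce essentially the same commuting fields, but yours needs no coordinate change and makes visible \emph{why} each choice works (e.g., transversality of the Euler field against $Av+c$ is exactly non-scalarity of $A$). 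What the paper's version buys in exchange is fully explicit formulas for $\delta$ in the original coordinates. Both arguments are valid over an arbitrary field, and your closing remark that $\ker A$ is $K$-rational correctly settles the only point where a field-extension issue could have entered; note also that your determinant criterion is a statement about polynomial (not pointwise) vanishing, so finite fields cause no trouble.
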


\begin{proof}
We will consider the following cases.  The symbols $a$, $b$, $c$, $e$, $f$, and $g$ are taken to be elements of $K$.
\begin{enumerate}[leftmargin=0.52in,label=Case~\arabic*]\addtocounter{enumi}{-1}
\item\label{case:0}:  $d(x) = c, \quad d(y) = g$. 
Observe that $d$ commutes with any constant derivation.
\item\label{case:1}: $d(x) = ax, \quad d(y) = ay, \quad a\neq 0$. 
Observe that $d$ commutes with $\delta$, where $\delta(x) = y$, $\delta(y) = x$.
\item\label{case:2}:  $d(x) = ax+by, \quad d(y) = ex+fy$, \quad different from~\ref{case:1}. 
Observe that $d$ commutes with $\delta$, where $\delta(x) = x, \quad \delta(y) = y$.
\item:  $d(x) = ax+by+c, \quad d(y) = ex+fy+g, \quad af-be \neq 0$. 
In this case, $d$ is equivalent to a derivation from \ref{case:1} or \ref{case:2} via a linear change of coordinates.  Let $(x_0,y_0)$ be the solution to the system $ax+by+c=ex+fy+g=0$.  Now let $u = x-x_0$ and $v = y-y_0$, so that $d(u) = au+bv$ and $d(v) = eu+fv$.
\item:  $d(x) = ax+by+c, \quad d(y) = ex+fy+g, \quad af-be = 0$
\begin{enumerate}
\item\label{case:4a} $a = b = 0$, different from \ref{case:0}. 
If $e\neq 0$, then $d$ commutes with and is transversal to $\delta$ given by $\delta(x) = -\frac{g}{e}$, $\delta(y) = 0$.  If $f \neq 0$, then $d$ commutes with and is transversal to $\delta$ given by $\delta(x) = 0$, $\delta(y) = -\frac{g}{f}$.
\item at least one of $a$ and $b$ is not $0$. 
First assume $a\neq 0$.  If $f=e=0$, then this is equivalent to~\ref{case:4a} by swapping the roles of $x$ and $y$.  Assume at least one of $f$ and $e$ is not $0$.  By the condition $af-be = 0$, it must be that $e\neq 0$.  Using the coordinate $z = ex - ay$ instead of $x$ puts this into the form of~\ref{case:4a}.  Next, assume $b \neq 0$.  If $f=e=0$, then this is equivalent to ~\ref{case:4a}.  Assume at least one of $f$ and $e$ is not $0$.  By the condition $af-be=0$, it must be that $f \neq 0$.  Using the coordinate $z = fx-by$ instead of $x$ puts this into the form of ~\ref{case:4a}.\qedhere
\end{enumerate}
\end{enumerate}
\end{proof}

\begin{de}
Let $K$ be a field and let $d$ be a $K$-derivation on $K[x_1,\ldots,x_n]$.  We say $d$ is {\em divergence-free} if 
$$
\sum_{i=1}^n \frac{\partial}{\partial x_i} d(x_i) = 0.
$$
\end{de}

\begin{proposition}\label{prop:divfree}Let $K$ be a field of characteristic $0$.  There exist integrable divergence-free $K$-derivations on $K[x,y]$ that are not coordinate-change equivalent to a derivation of degree less than or equal to $1$.\end{proposition}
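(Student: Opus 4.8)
The plan is to produce the required derivation directly from Example~\ref{ex:nonlinear}. Let $d$ be the derivation on $K[x,y]$ given by $d(x)=1+x^2$ and $d(y)=-2xy$. It has degree $2$, and a one-line computation shows it is divergence-free, since $\partial_x(1+x^2)+\partial_y(-2xy)=2x-2x=0$. Moreover, $d$ is integrable: the derivation $\delta$ with $\delta(x)=0$, $\delta(y)=y$ commutes with $d$ and is linearly independent from it over $K(x,y)$, because the determinant $d(x)\delta(y)-d(y)\delta(x)=(1+x^2)y$ is nonzero. (Over $K=\R$ this is exactly the pair in Example~\ref{ex:nonlinear}; the same formulas work over any field of characteristic $0$.) It therefore remains only to show that $d$ is not coordinate-change equivalent to any derivation of degree at most $1$.

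My approach to this last point is to exhibit a numerical invariant of derivations that is preserved under coordinate changes and that separates $d$ from every derivation of degree $\leqslant 1$. The invariant I would use is the number of singular points of the vector field over the algebraic closure, that is, the cardinality of the zero set $V(d(x),d(y))\subseteq \overline{K}^2$. For our $d$ this set is $\{(x,y): x^2=-1,\ xy=0\}$; since $\Char K=0$ forces $x=\pm\sqrt{-1}\neq 0$, we get $y=0$, so the zero set consists of exactly the two distinct points $(\sqrt{-1},0)$ and $(-\sqrt{-1},0)$.

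Next I would check that no nonzero derivation of degree $\leqslant 1$ has exactly two singular points. Writing such a derivation as $d(x)=a_1x+b_1y+c_1$, $d(y)=a_2x+b_2y+c_2$, its zero set is the intersection in $\overline{K}^2$ of the two affine loci $\{a_ix+b_iy+c_i=0\}$. Enumerating the possibilities---an empty locus when some $(a_i,b_i)=(0,0)$ with $c_i\neq 0$, two nonparallel lines meeting in a single point, two parallel or coincident lines, or a single line together with a trivial equation---shows the cardinality always lies in $\{0,1,\infty\}$ and is never $2$. Combined with the invariance of this cardinality, this shows $d$ is not equivalent to any derivation of degree $\leqslant 1$, completing the proof.

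The step I expect to require the most care is the invariance claim. For a $K$-algebra automorphism $\phi$ of $K[x,y]$, the coordinate-changed derivation is the conjugate $d^\phi=\phi\circ d\circ\phi^{-1}$, and the key identity is that the ideal generated by the image of $d^\phi$ equals $\phi$ applied to the ideal generated by the image of $d$; since $d(g)=\partial_x g\cdot d(x)+\partial_y g\cdot d(y)$, this ideal is just $(d(x),d(y))$, so $(d^\phi(x),d^\phi(y))=\phi\bigl((d(x),d(y))\bigr)$. Translating to zero sets, the automorphism $\Phi$ of $\overline{K}^2$ induced by $\phi$ carries the singular locus of $d^\phi$ bijectively onto that of $d$, so both the cardinality and the dimension of the singular locus are preserved. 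I would make sure to treat the degenerate configurations in the line-intersection count explicitly, since these are precisely the cases that could spuriously produce a finite count other than $0$ or $1$.
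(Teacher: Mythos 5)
Your proposal is correct and follows essentially the same route as the paper: it uses the derivation of Example~\ref{ex:nonlinear}, verifies it is divergence-free and integrable via the commuting derivation $\delta(x)=0$, $\delta(y)=y$, and distinguishes it from degree-$\leqslant 1$ derivations by the number of zeros of the vector field in $\overline{K}^2$, which is invariant under coordinate changes. The only difference is that you spell out details the paper leaves implicit (the ideal-theoretic proof of invariance and the case analysis showing a degree-$\leqslant 1$ derivation vanishes at $0$, $1$, or infinitely many points), which is a welcome elaboration rather than a different approach.
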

\begin{proof}
The $K$-derivation defined by the same equations as $d$ from Example~\ref{ex:nonlinear} is divergence-free and integrable.  Note that the vector field corresponding to $d$ vanishes only at the points $(\sqrt{-1},0)$ and $(-\sqrt{-1},0)$  in $\overline{K}^2$.  Since $\text{char} K = 0$, these points are distinct.  After a coordinate change, the number of points in $\overline{K}^2$ at which a vector field vanishes does not change.  The vector field of any derivation of degree less than or equal to $1$ vanishes at zero, one, or infinitely many points.  We conclude that $d$ is not coordinate-change equivalent to a derivation  of degree no greater than $1$.
\end{proof}

In the following section, we study a class of divergence-free vector fields.  We show that no member of this class is integrable.

\section{Main result}\label{sec:main}

Fix a field $K$ of characteristic $0$.  Suppose $\delta_f$ represents a second-order differential equation of the form $$\ddot{x} = f,$$ where $f \in K[x]\setminus K$, which corresponds to a conservative Newton system.  That is,
\begin{equation}\label{eq:deltaf}
\delta_f\begin{pmatrix}x \\ y\end{pmatrix} = \begin{pmatrix}y \\ f\end{pmatrix}
\end{equation} 
If $\deg f = 1$, then $\delta_f$ is integrable by Proposition~\ref{prop:linear}.  The following theorem, which is our main result, addresses the case of $\deg f \geqslant 2$.

\begin{theorem}\label{thm:main} For every
\begin{itemize}
\item
$f\in K[x]$ such that $\deg f \geqslant 2$ and
\item $K$-derivation $\gamma$ on $K[x,y]$ that commutes with $\delta_f$, where
$\delta_f$  the $K$-derivation defined by ~\eqref{eq:deltaf},
\end{itemize}
 there exists $q \in K[H]$ such that \[\gamma = q\cdot\delta_f,\] where $H = y^2 - 2\int fdx$ and $\int fdx$ has $0$ as the constant term.
\end{theorem}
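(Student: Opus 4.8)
The plan is to translate the commuting relation into a single scalar equation and then determine all of its polynomial solutions. Write $\gamma(x)=a$ and $\gamma(y)=b$ with $a,b\in K[x,y]$. Evaluating $[\gamma,\delta_f]=0$ on the generators $x$ and $y$, and using $\delta_f(f)=f'y$, gives $b=\delta_f(a)$ and $f'a=\delta_f(b)=\delta_f^2(a)$, so the whole problem collapses to classifying those $a$ with
\[
\delta_f^2(a)=f'a,
\]
after which $\gamma$ is recovered as $a\,\partial_x+\delta_f(a)\,\partial_y$. A direct computation shows that the set $L$ of such $a$ is a $K[H]$-module, and that $y\in L$ since $\delta_f^2(y)=\delta_f(f)=f'y$; more generally $q(H)y\in L$ for every $q\in K[H]$. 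Thus the theorem is equivalent to the reverse inclusion $L\subseteq K[H]\,y$.

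The next step introduces a conserved Wronskian and a parity splitting. For $a\in L$ set $W(a):=y\,\delta_f(a)-f a$; differentiating and using $\delta_f^2(a)=f'a$ yields $\delta_f(W(a))=0$, so $W(a)$ lies in the ring of polynomial first integrals of $\delta_f$, which is $K[H]$ \cite{nowicki94}. The involution $\sigma\colon y\mapsto -y$ satisfies $\sigma\delta_f\sigma=-\delta_f$, hence $\sigma\delta_f^2\sigma=\delta_f^2$ and $L$ is $\sigma$-stable; writing $a=a^{+}+a^{-}$ in its $\sigma$-even and $\sigma$-odd parts, both summands lie in $L$, and it suffices to treat each parity separately. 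The odd part is immediate: $a^{-}$ is divisible by $y$, and the polynomial $v:=a^{-}/y$ satisfies $\delta_f(v)=W(a^{-})/y^2$; a parity count shows $W(a^{-})$ is simultaneously $\sigma$-odd and (being in $K[H]$) $\sigma$-even, hence zero, so $v$ is a polynomial first integral, $v\in K[H]$, and $a^{-}\in K[H]\,y$ as desired.

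It remains to prove $a^{+}=0$, and this is the genuine obstacle. Here $W(a^{+})=w(H)$ for some $w\in K[H]$ that need not vanish, which is precisely the situation the theorem must exclude: it corresponds to a polynomial ``second solution'' of the variational equation $\ddot u=f'(x)u$ along the flow, whereas reduction of order forces a second solution containing $\int y^{-2}\,dt=\int y^{-3}\,dx$, which is not polynomial. To make this algebraic I would expand $a^{+}=\sum_{j\ \mathrm{even}}a_j(x)y^j$, substitute into $\delta_f^2(a)=f'a$, and read off a triangular recurrence expressing each $a_{j+2}$ in terms of $a_j$ via one differentiation and a division by $f$. Because solving the recurrence requires dividing by $f$ and integrating, its natural home is a ring of the form $K[x^{1/t},x^{-1/t},y]$, and the commuting derivations built on such rings in Lemma~\ref{lem:positive} are exactly what organizes this solution. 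The crux — the step I expect to be hard — is to show that requiring $a^{+}$ to lie in $K[x,y]$ (finitely many terms, no negative or fractional powers of $x$) is incompatible with $w\neq 0$, forcing $w=0$ and then, by the even-parity analogue of the Step-2 argument, $a^{+}=0$.

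To control the bookkeeping in that final step I would pass to the weighted grading $\deg x=2$, $\deg y=n+1$ with $n=\deg f$, under which $\delta_f$ and multiplication by $f'$ are homogeneous of the same degree on leading terms. Taking the top weighted-homogeneous component $\bar a$ of $a^{+}$ reduces the equation $\delta_f^2(a)=f'a$ to a model equation in which $f$ is replaced by its leading monomial $c_n x^n$; there the triangular recurrence can be solved in closed form, and the requirement that the solution terminate (i.e.\ produce only nonnegative integer exponents of $x$) should be shown to hold only when the associated Wronskian constant vanishes. Since $a^{+}\neq 0$ would force $\bar a\neq 0$ with $w\neq 0$, this contradiction gives $a^{+}=0$, completing $L\subseteq K[H]\,y$ and hence the theorem. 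The only place I anticipate real difficulty is verifying this termination/non-termination dichotomy uniformly in $n\geqslant 2$, which is where the recurrence-and-extension-ring machinery earns its keep.
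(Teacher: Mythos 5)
Your reduction is correct, and the odd half of your argument is complete: evaluating $[\gamma,\delta_f]=0$ on $x$ and $y$ does collapse the problem to $\delta_f^2(a)=f'a$ with $b=\delta_f(a)$, and your Wronskian-plus-parity treatment of $a^-$ is genuinely different from, and slicker than, the paper's handling of this half, which proceeds by computing the dimension of the solution space of a triangular ODE system (Lemmas~\ref{lem:01} and~\ref{lem:02}) plus a separate leading-coefficient argument to kill the even-degree case (Lemma~\ref{lem:Ie implies d_m=0}). The one ingredient you must cite carefully is $K[x,y]^{\delta_f}=K[H]$, which you use twice (for $W(a^-)\in K[H]$ and for $v\in K[H]$); it follows from the Nagata--Nowicki theorem that the ring of constants of a planar derivation is $K[P]$ for a single polynomial $P$ (see \cite{nowicki94}), together with the fact that $H$ is non-composite, which holds here since $\deg_y H=2$ and $H$ genuinely depends on $x$.

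The gap is that the even part, $a^+=0$, is where the entire content of the theorem lies, and you have not proved it: you outline a strategy and explicitly defer the decisive step, your ``termination/non-termination dichotomy.'' That step is exactly what Lemmas~\ref{lem:polynomial}, \ref{lem:positive}, \ref{lem:07} and Corollary~\ref{cor:pnotodd} accomplish, and they constitute the bulk of the paper's proof. Two concrete remarks on why your sketch does not close it. First, your weighted-grading reduction to the monomial model $f=c_nx^n$ is sound (no cancellation can spoil it, since the top component of the right-hand side is $\overline{f'}\,\bar a$, a product of nonzero homogeneous elements), but the model problem is essentially as hard as the original: it is precisely the recurrence the paper must control, and nothing about its coefficients visibly obstructs termination. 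Second, the dichotomy cannot be a soft finiteness statement, because nonzero even solutions really do exist once fractional exponents of $x$ are allowed: Lemma~\ref{lem:positive} constructs commuting pairs of exactly the forbidden form over $K[x^{1/(2k-1)},x^{-1/(2k-1)},y]$ when $\deg h=-\tfrac{2k+1}{2k-1}$, and $h=x$ gives another. So any argument must detect that $\deg f$ is an integer $\geqslant 2$, as opposed to lying in $\{1\}\cup\{-1\}\cup\bigl\{-\tfrac{2k+1}{2k-1}\bigr\}\cup\bigl\{-\tfrac{k}{k-1}\bigr\}$. The paper does this indirectly: the recurrence forces $P_m(\deg f)=0$ for a nonzero $P_m\in\Z[X]$ of degree at most $\tfrac{m+1}{2}$ (Lemma~\ref{lem:polynomial}), and the exceptional fractional-exponent solutions are then used to exhibit $\tfrac{m+1}{2}$ distinct roots of $P_m$, none of which is an integer $\geqslant 2$ (Lemma~\ref{lem:07}). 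Until you supply an argument of this kind, your proposal establishes only the $a^-$ half of the theorem.
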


As a corollary, we recover the following result on conservative Newton systems with a center at the origin.  This result was first proven in \cite[Theorem~11]{amelkin77}  and was given new proofs in \cite[Theorem~4.1]{chicone89} and \cite[Corollary 2.6]{cima99} (see also \cite[p.~30]{volokitin99}).

\begin{corollary}
The real system
\begin{align*}
\dot{x} &= -y\\
\dot{y} &= f(x),
\end{align*}
with $f(0) = 0$, $f'(0) = 1$,
has a transversal commuting polynomial derivation if and only if $f(x) = x$.
\end{corollary}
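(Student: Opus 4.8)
The plan is to deduce the corollary directly from Theorem~\ref{thm:main}, the point being that a rank-one classification of the commuting module is exactly the statement that no companion derivation can be transversal. First I would dispose of the elementary ``if'' direction and simultaneously locate the dividing line. The normalization $f(0)=0$, $f'(0)=1$ forces any $f$ with $\deg f\leqslant 1$ to equal $x$, so ``$f\neq x$'' is the same as ``$\deg f\geqslant 2$'' under these hypotheses. When $f(x)=x$ the associated derivation $e=-y\,\partial_x+x\,\partial_y$ is nonzero of degree $1$, hence integrable by Proposition~\ref{prop:linear}; concretely the Euler derivation $\delta=x\,\partial_x+y\,\partial_y$ commutes with $e$ and satisfies $\det\begin{pmatrix}-y & x\\ x & y\end{pmatrix}=-(x^2+y^2)\neq 0$ away from the origin, so it is a transversal commuting polynomial derivation. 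This proves one implication and shows that the other implication need only be checked for $\deg f\geqslant 2$.

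For the ``only if'' direction I would treat the case $\deg f\geqslant 2$ and show that no transversal commuting polynomial derivation exists. The derivation attached to the system is $e=-y\,\partial_x+f\,\partial_y$, which differs in the sign of its $\partial_x$-coefficient from $\delta_f$. Applying the invertible linear change of coordinates $Y=-y$ — a polynomial automorphism of $\R[x,y]$ with constant nonzero Jacobian, so that it preserves polynomiality of derivations, the commutator bracket, and transversality at noncritical points — I would identify $e$ with $\delta_{-f}$, where $\deg(-f)=\deg f\geqslant 2$. Theorem~\ref{thm:main}, applied to $-f$, then asserts that every polynomial derivation commuting with $\delta_{-f}$ has the form $q\cdot\delta_{-f}$ with $q\in K[H]$. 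Every such derivation is pointwise proportional to $\delta_{-f}$ and therefore fails to be transversal; transporting back through $Y=-y$, every polynomial derivation commuting with $e$ is proportional to $e$ and is not transversal. Combining the two directions yields the stated equivalence.

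Once Theorem~\ref{thm:main} is granted the argument is essentially bookkeeping, so the only step requiring care — the ``main obstacle'' — is the translation between the two formulations. I must make sure the sign discrepancy between $\dot x=-y$ and $\delta_f(x)=y$ is absorbed correctly, so that the theorem is invoked for $\delta_{-f}$ rather than $\delta_f$, and that ``transversal'' is precisely the negation of ``pointwise proportional,'' so that the rank-one conclusion of Theorem~\ref{thm:main} immediately forbids a transversal companion. I would also verify explicitly that the normalization $f(0)=0$, $f'(0)=1$ really confines the low-degree case to $f(x)=x$, since this is exactly what aligns the boundary $\deg f=2$ with the two directions of the equivalence.
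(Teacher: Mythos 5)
Your proposal is correct and is essentially the paper's (implicit) argument: the paper states this corollary without a written proof, as an immediate consequence of Theorem~\ref{thm:main}, which is exactly how you deduce it. Your bookkeeping is sound --- the coordinate change $Y=-y$ correctly identifies the system's derivation with $\delta_{-f}$, the normalization $f(0)=0$, $f'(0)=1$ does confine the low-degree case to $f(x)=x$, and the Euler derivation $x\,\partial_x+y\,\partial_y$ handles the ``if'' direction.
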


\begin{proof}[Proof of Theorem~\ref{thm:main}]
Fix $f \in K[x]$ such that $\deg f \geqslant 2$.  Fix a $K$-derivation $\delta$ so that $\delta(x) = y$ and $\delta(y) = f$.  Fix a $K$-derivation $\gamma$ such that $[\delta,\gamma] = 0$.  First consider the case in which $\deg_y \gamma \leqslant 1$.

\begin{lemma}\label{lem:lowdeg}
If $$\gamma\begin{pmatrix}x \\ y\end{pmatrix} = \begin{pmatrix}c_1y+c_0 \\ d_1y+d_0\end{pmatrix},$$ where $c_1,c_0,d_1,d_0\in K[x]$, and $[\delta,\gamma] = 0$, then $$\gamma\begin{pmatrix}x \\ y\end{pmatrix} = c_1\delta.$$
\end{lemma}
\begin{proof}
The equations $\delta(\gamma(x)) = \gamma(\delta(x))$ and $\delta(\gamma(y)) = \gamma(\delta(y))$ yield
$$
\begin{cases}
c_1'y^2+c_0'y + fc_1 = d_1y+d_0\\
d_1'y^2 + d_0'y + fd_1 = f'c_1y + f'c_0.
\end{cases}
$$
Equating coefficients of like powers of $y$, we obtain the two independent systems
\begin{equation}\label{eq:sys1}
c_1' = 0, \quad d_0' = c_1f', \quad fc_1 = d_0
\end{equation}
and
\begin{equation}\label{eq:sys2}d_1' = 0, \quad c_0'=d_1, \quad fd_1 = c_0f'.
\end{equation}
The solution set of~\eqref{eq:sys1} is $c_1 = \text{constant}, d_0 = c_1f$.   System~\eqref{eq:sys2} has no non-zero solution, which we deduce as follows.  We have $$\left(\frac{c_0}{f}\right)' = \frac{c_0'f -f'c_0}{f^2} = 0,$$ so $c_0 = (\text{const})f$.  Therefore, $d_1 = (\text{const})f'$, which implies $d_1'=(\text{const})f'' = 0$.  Since we assume $\deg f \geqslant 2$, the constant must be $0$.  Therefore, \[\gamma\begin{pmatrix}x \\ y\end{pmatrix} = c_1\begin{pmatrix}y \\ f\end{pmatrix}.\qedhere\]
\end{proof}

Now assume $\deg_y \gamma = M\geqslant 2$.  Write 
\begin{equation}\label{g as a polynomial}
\gamma\begin{pmatrix}x \\ y\end{pmatrix} = \begin{pmatrix}c_My^M +\ldots + c_0 \\ d_My^M + \ldots + d_0\end{pmatrix},
\end{equation}
where for all $i$, $c_i, d_i \in K[x]$.  Since $M=\deg_y\gamma$, at least one of $c_M$ and $d_M$ is non-zero.  Now the system 
$$\begin{pmatrix}\delta(\gamma(x)) \\ \delta(\gamma(y))\end{pmatrix} = \begin{pmatrix}\gamma(\delta(x)) \\ \gamma(\delta(y))\end{pmatrix}$$  becomes 
\begin{multline}\label{expandDiffEq}
\begin{pmatrix}c_M'y^{M+1}+c_{M-1}'y^M+\ldots+c_0'y \\ d_M'y^{M+1}+d_{M-1}'y^M+\ldots+d_0'y\end{pmatrix}+\begin{pmatrix}Mfc_My^{M-1}+\ldots+fc_1 \\ Mfd_My^{M-1}+\ldots+fd_1\end{pmatrix}\\ =\begin{pmatrix}0 & 1 \\ f' & 0\end{pmatrix}\begin{pmatrix}c_My^M +\ldots + c_0 \\ d_My^M + \ldots + d_0\end{pmatrix}.
\end{multline}
Viewing these matrix entries as polynomials in $y$ and equating coefficients yields the following system of first-order ODEs
\begin{center}
\begin{tabular}{cc}
$c_M' = 0$ & $d_M' = 0$ \\
$c_{M-1}' = d_M$ & $d_{M-1}' = f'c_M$ \\
$c_{M-2}' + Mfc_M = d_{M-1}$ & $d_{M-2}' + Mfd_M = f'c_{M-1}$ \\
$c_{M-3}' + (M-1)fc_{M-1} = d_{M-2}$ & $d_{M-3}' + (M-1)fd_{M-1} = f'c_{M-2}$ \\
$c_{M-4}' + (M-2)fc_{M-2} = d_{M-3}$ & $d_{M-4}' + (M-2)fd_{M-2} = f'c_{M-3}$ \\
$c_{M-5}' + (M-3)fc_{M-3} = d_{M-4}$ & $d_{M-5}' + (M-3)fd_{M-3} = f'c_{M-4}$ \\
$\vdots$ & $\vdots$ \\
$c_0' + 2fc_2 = d_1$ & $d_0' + 2fd_2 = f'c_1$ \\
$fc_1 = d_0$ & $fd_1 = f'c_0$
\end{tabular}
\end{center}
as well as the condition \[c_M\neq 0 \text{ or } d_M \neq 0.\]
In each equation, it is the case that if $c_i$ and $d_j$ both appear, then $i$ and $j$ have opposite parities.  Thus, we see that this system consists of two independent systems.  If $M$ is odd, these systems are:
\begin{center}
\begin{tabular}{cc}
$(Io)_M$ & $(IIo)_M$ \\
$c_M' = 0$ & $d_M' = 0$ \\
$d_{M-1}' = f'c_M$ & $c_{M-1}' = d_M$ \\
$c_{M-2}' + Mfc_M = d_{M-1}$ & $d_{M-2}' + Mfd_M = f'c_{M-1}$ \\
$d_{M-3}' + (M-1)fd_{M-1} = f'c_{M-2}$ & $c_{M-3}' + (M-1)fc_{M-1} = d_{M-2}$ \\
$c_{M-4}' + (M-2)fc_{M-2} = d_{M-3}$ & $d_{M-4}' + (M-2)fd_{M-2} = f'c_{M-3}$ \\
$d_{M-5}' + (M-3)fd_{M-3} = f'c_{M-4}$ & $c_{M-5}' + (M-3)fc_{M-3} = d_{M-4}$\\
$\vdots$ & $\vdots$ \\
$c_1' + 3fc_3 = d_2$ & $d_1' + 3fd_3 = f'c_2$ \\
$d_0' + 2fd_2 = f'c_1$ & $c_0' + 2fc_2 = d_1$\\
$fc_1 = d_0$ & $fd_1 = f'c_0$
\end{tabular}
\end{center}
If $M$ is even, the systems are:
\begin{center}
\begin{tabular}{ccc}
$(IIe)_M$ & $(Ie)_M$ \\
$c_M' = 0$ & $d_M' = 0$ \\
$d_{M-1}' = f'c_M$ & $c_{M-1}' = d_M$ \\
$c_{M-2}' + Mfc_M = d_{M-1}$ & $d_{M-2}' + Mfd_M = f'c_{M-1}$ \\
$d_{M-3}' + (M-1)fd_{M-1} = f'c_{M-2}$ & $c_{M-3}' + (M-1)fc_{M-1} = d_{M-2}$ \\
$c_{M-4}' + (M-2)fc_{M-2} = d_{M-3}$ & $d_{M-4}' + (M-2)fd_{M-2} = f'c_{M-3}$ \\
$d_{M-5}' + (M-3)fd_{M-3} = f'c_{M-4}$ & $c_{M-5}' + (M-3)fc_{M-3} = d_{M-4}$\\
$\vdots$ & $\vdots$ \\
$c_0' + 2fc_2 = d_1$ & $d_0' + 2fd_2 = f'c_1$ \\
$fd_1 = f'c_0$ & $fc_1 = d_0$
\end{tabular}
\end{center}
In light of these observations, let
\begin{gather*}
n = \max\{i \:|\: i \text{ odd and } c_i\neq 0 \text{ or } i \text{ even and } d_i \neq 0\},\\
p = \max\{i \:|\: i \text{ even and } c_i\neq 0 \text{ or } i \text{ odd and } d_i \neq 0\}.
\end{gather*}
Note that $n$ or $p$ may be undefined.  Now write $\gamma = \gamma_1 + \gamma_2$, where $\gamma_1(x)$ contains the terms of $\gamma(x)$ of odd degree in $y$, $\gamma_1(y)$ contains the terms of $\gamma(y)$ of even degree in $y$, $\gamma_2(x)$ contains the terms of $\gamma(x)$ of even degree in $y$, and $\gamma_2(y)$ contains the terms of $\gamma(y)$ of odd degree in $y$.  Explicitly,
$$
\gamma_1\begin{pmatrix}x \\ y\end{pmatrix} = 
\begin{cases}
\begin{pmatrix}c_ny^n+c_{n-2}y^{n-2}+\ldots+c_1y \\ d_{n-1}y^{n-1}+d_{n-3}y^{n-3}+\ldots+d_0\end{pmatrix} & \text{ if } n \text{ is odd},
\\ \\ \begin{pmatrix}c_{n-1}y^{n-1}+c_{n-3}y^{n-3}+\ldots+c_1y \\ d_ny^n+d_{n-2}y^{n-2}+\ldots+d_0\end{pmatrix} &\text{ if } n \text{ is even},
\\ \\ \begin{pmatrix}0 \\ 0\end{pmatrix} & \text{ if } n \text{ is undefined,}
\end{cases}
$$
and
$$
\gamma_2\begin{pmatrix}x \\ y\end{pmatrix} = 
\begin{cases}
\begin{pmatrix}c_{p-1}y^{p-1}+c_{p-3}y^{p-3}+\ldots+c_0 \\ d_py^p+d_{p-2}y^{p-2}+\ldots+d_1y\end{pmatrix} & \text{if } p \text{ is odd},
\\ \\ \begin{pmatrix}c_py^p+c_{p-2}y^{p-2}+\ldots+c_0 \\ d_{p-1}y^{p-1}+d_{p-3}y^{p-3}+\ldots+d_1y\end{pmatrix} &\text{if } p \text{ is even,}
\\ \\ \begin{pmatrix}0 \\ 0\end{pmatrix} & \text{ if } p \text{ is undefined.}
\end{cases}
$$
As we have seen, the criterion $[\delta,\gamma ]=0$ is equivalent to the conjunction of two systems of equations in which one system only involves the terms of $\gamma_1$ and the other only involves the terms of $\gamma_2$.  Hence, $[\delta,\gamma_1] = [\delta,\gamma_2] = 0$.

Let us examine the possible values of $n$.  If $n$ is undefined, then $\gamma_1(x,y) = (0,0)$.  If $n = 0$, then $\gamma_1$ is the same as the $\gamma$ of Lemma~\ref{lem:lowdeg} with $c_1 = c_0 = d_1 = 0$.  Thus, by Lemma~\ref{lem:lowdeg}, $\gamma_1 = 0$, which contradicts that $n=0$.  If $n = 1$, then $\gamma_1$ is the same as the $\gamma$ of Lemma~\ref{lem:lowdeg} with $c_0 = d_1 = 0$.  Thus by Lemma~\ref{lem:lowdeg}, $\gamma_1 = c_1\delta$, and, in the proof of Lemma~\ref{lem:lowdeg}, it is shown that $c_1 \in K$.  If $n \geqslant 2$ is even, the coefficients of $\gamma_1$ must satisfy $(Ie)_n$ and $d_n \neq 0$.  We will show in Lemma~\ref{lem:Ie implies d_m=0} and Corollary~\ref{cor:nnoteven} that this is impossible.  If $n$ is odd, the coefficients of $\gamma_1$ must satisfy $(Io)_n$ and $c_n \neq 0$.  We will show in Lemma~\ref{lem:01} and Lemma~\ref{lem:02} that this implies $\gamma_1 = q\delta$ for some $q \in K[H]$.   In summary,
\begin{itemize}

\item If $n$ is undefined, then $\gamma_1 = 0 \cdot \delta$.

\item It is impossible that $n=0$.

\item If $n=1$, then $\gamma_1 = c_1 \cdot \delta$ and $c_1 \in K$.

\item It is impossible that $n \geqslant 2$ is even. (Lemma \ref{lem:Ie implies d_m=0}, Corollary \ref{cor:nnoteven})

\item If $n \geqslant 3$ is odd, then $\gamma_1 = q \cdot \delta$ for some $q \in K[H]$. (Lemmas \ref{lem:01}, \ref{lem:02})

\end{itemize}

Let us examine the possible values of $p$.  If $p$ is undefined, then $\gamma_2(x,y) = (0,0)$.  If $p = 0$, then $\gamma_2$ is the same as the $\gamma$ from Lemma~\ref{lem:lowdeg} with $c_1 = d_1 = d_0 = 0$.  Thus, by Lemma~\ref{lem:lowdeg}, $\gamma_2 = 0$, which contradicts that $p=0$.  If $p = 1$, then $\gamma_2$ is the same as the $\gamma$ of Lemma~\ref{lem:lowdeg} with $c_1 = d_0 = 0$.  Thus, by Lemma~\ref{lem:lowdeg}, $\gamma_2 = 0$, which contradicts that $p=1$.  If $p \geqslant 2$ is even, the coefficients of $\gamma_2$ must satisfy $(IIe)_p$ and $c_p \neq 0$.  We will show in Lemma~\ref{lem:IIe implies c_m=0} and Corollary~\ref{cor:pnoteven} that this is impossible.  If $p \geqslant 3$ is odd, the coefficients of $\gamma_2$ must satisfy $(IIo)_p$ and $d_p \neq 0$.  We will show in Lemma~\ref{lem:polynomial}, Lemma~\ref{lem:positive}, Lemma~\ref{lem:07}, and Corollary~\ref{cor:pnotodd} that this is impossible.  We summarize these results as follows:
\begin{itemize}

\item If $p$ is undefined, then $\gamma_2 = 0 \cdot \delta$.

\item It is impossible that $p=0$.

\item It is impossible that $p=1$.

\item It is impossible that $p \geqslant 2$ is even. (Lemma \ref{lem:IIe implies c_m=0}, Corollary \ref{cor:pnoteven})

\item It is impossible that $p \geqslant 3$ is odd. 
(Lemmas \ref{lem:polynomial}, \ref{lem:positive}, \ref{lem:07}, Corollary \ref{cor:pnotodd})

\end{itemize}

From the bulleted statements, it follows that $\gamma_1 = q\delta$ for some $q \in K[H]$ and $\gamma_2 = 0$.
These lemmas and their corollaries constitute the rest of the proof of Theorem~\ref{thm:main}.

\vspace{3mm}

\begin{de}Let $a \in K[x,y]$.  We define $\int a dx$ to be the element of $K[x,y]$ whose partial derivative with respect to $x$ is $a$ and whose constant term is $0$.
\end{de}

\begin{lemma}\label{lem:01}
For every odd integer $m \geqslant 3$,  the solution set of $(Io)_m$, with $c_0,\ldots,d_m$ treated as variables, is an $\frac{m+1}{2}$-dimensional $K$-vector space.
\end{lemma}

\begin{proof}
Fix $m \geqslant 3$.  Label the equations of $(Io)_m$ as follows:
\begin{center}
\begin{tabular}{cc}
$e_{m+1}$ & $c_m' = 0$ \\
$e_m$ & $d_{m-1}' = f'c_m$ \\
$e_{m-1}$ & $c_{m-2}' + mfc_m = d_{m-1}$ \\
$\vdots$ & $\vdots$ \\
$e_1$ & $d_0' + 2fd_2 = f'c_1$ \\
$e_0$ & $fc_1 = d_0$
\end{tabular}
\end{center}
We show the following by induction on $k$, $0 \leqslant k \leqslant \frac{m-3}{2}$:

\begin{equation}\label{eqn:indstatement}
\text{\parbox{.85\textwidth}{The solution set of $\{e_{m+1},e_m,\ldots,e_{m-2k-2},d_{m-2k-3}=fc_{m-2k-2}\}$ is a $K$-vector space of dimension $k+2$.}}
\end{equation}

\vspace{3mm}

\noindent \underline{Base Case: $k=0$}

The system \begin{equation}\label{eq:em1fcm2}\{e_{m+1},e_m,e_{m-1},e_{m-2},d_{m-3}=fc_{m-2}\}
\end{equation} is
\begin{center}
\begin{tabular}{ll}
$e_{m+1}:\qquad$ & $c_m' = 0$ \\ $e_m:$ & $d_{m-1}' = f'c_m$ \\ $e_{m-1}:$ & $c_{m-2}' = -mfc_m + d_{m-1}$ \\ $e_{m-2}:$ & $d_{m-3}' = -(m-1)fd_{m-1} + f'c_{m-2}$ \\ & $d_{m-3} = fc_{m-2}$
\end{tabular}
\end{center}
 Let $\big(\tilde{d}_{m-3},\tilde{c}_{m-2},\tilde{d}_{m-1},\tilde{c}_m\big)$ be a solution of~\eqref{eq:em1fcm2}.   By $e_{m+1}$, $\tilde{c}_m = a_1$ for some $a_1 \in K$.  
It follows that
$$
f'\tilde{c}_{m-2}+f\tilde{c}_{m-2}' = -(m-1)f\tilde{d}_{m-1} + f'\tilde{c}_{m-2},
$$
and hence
$$
\tilde{c}_{m-2}' = -(m-1)\tilde{d}_{m-1},
$$
and so
\begin{align*}
\tilde{d}_{m-1} &= mf\tilde{c}_m + \tilde{c}_{m-2}' = mf\tilde{c}_m - (m-1)\tilde{d}_{m-1}.
\end{align*}
Thus \[\tilde{d}_{m-1} = f\tilde{c}_m = a_1f.\]
It follows from this and $e_{m-1}$ that \[\tilde{c}_{m-2}' = -(m-1)a_1f,\] and hence
\[
\tilde{c}_{m-2} = -\int (m-1)a_1f dx + a_2
\]
for some $a_2 \in K$.  From this and the condition $\tilde{d}_{m-3} = f\tilde{c}_{m-2}$ it follows that
$$
\tilde{d}_{m-3} = f\left(-\int (m-1)a_1f dx + a_2\right).
$$
One can verify that
\begin{equation}\label{eqn:soltuple}
\Big(f\big(-\int (m-1)a_1f dx + a_2\big), \quad -\int (m-1)a_1f dx + a_2, \quad a_1f, \quad a_1\Big)
\end{equation}
 is indeed a solution of~\eqref{eq:em1fcm2}. 
We have just shown that the solution set of~\eqref{eq:em1fcm2} is exactly the elements of $K[x]^4$ of the form \eqref{eqn:soltuple} with $a_1, a_2 \in K$.  This set is the $K$-span of the tuples 
\[
\Big(f\big(-\int (m-1)f dx \big), \ -\int (m-1)f dx, \  f, \ 1\Big)
\quad
\text{and}
\quad
(f, \ 1, \ 0, \ 0).\]
Hence, the solution space is a two-dimensional $K$-vector space.

\vspace{3mm}

\noindent \underline{Inductive Step:}  Fix $k$, $0 \leqslant k < \frac{m-3}{2}$.  
Consider
\begin{gather}
\{e_{m+1},e_m,\ldots,e_{m-2k-2},d_{m-2k-3}=fc_{m-2k-2}\}\label{stepk}\\
\{e_{m+1},e_m,\ldots,e_{m-2k-4},d_{m-2k-5}=fc_{m-2k-4}\}\label{stepkplus1}
\end{gather}
Assume
\begin{equation}\label{eqn:ih}
\text{\parbox{.9\textwidth}{The solution set of~\eqref{stepk} 
is a $K$-vector space of dimension $k+2$.}}
\end{equation}
We will show
\begin{equation}\label{eqn:indstep}
\text{\parbox{.9\textwidth}{The solution set of~\eqref{stepkplus1} 
is a $K$-vector space of dimension $k+3$.}}
\end{equation}
We first show that
\begin{equation}\label{eqn:solset}
\text{\parbox{0.9\textwidth}{The solution set of~\eqref{stepkplus1} 
is the solution set of}}
\end{equation}
\begin{equation}\label{eq:theothersolset}
\{e_{m+1},\ldots,e_{m-2k-2},e_{m-2k-3}, d_{m-2k-3}=fc_{m-2k-2},
d_{m-2k-5}=fc_{m-2k-4}\}.
\end{equation}
For ease of reference, we write the equations $e_{m-2k-3}$ and $e_{m-2k-4}$:

\begin{center}
\begin{tabular}{ll}
$e_{m-2k-3}:$ & $c_{m-2k-4}' = -(m-2k-2)fc_{m-2k-2}+d_{m-2k-3}$
\\ $e_{m-2k-4}:$ & $d_{m-2k-5}' = -(m-2k-3)fd_{m-2k-3}+f'c_{m-2k-4}$
\end{tabular}
\end{center}
Suppose $(\tilde{d}_{m-2k-5},\ldots,\tilde{c}_m)$ is a solution of \[\{e_{m+1},\ldots,e_{m-2k-4},d_{m-2k-5}=fc_{m-2k-4}\}.\]
Then $(\tilde{d}_{m-2k-3},\ldots,\tilde{c}_m)$ is a solution of $\{e_{m+1},\ldots,e_{m-2k-2}\}$.  We now show that 
\begin{equation}\label{eq:dm2k3tilde}
\tilde{d}_{m-2k-3}=f\tilde{c}_{m-2k-2}.
\end{equation}
Since $(\tilde{d}_{m-2k-5},\ldots,\tilde{c}_m)$ satisfies $e_{m-2k-4}$, we have
\begin{equation}\label{eqn:lemma2:1}
\tilde{d}_{m-2k-5}'=-(m-2k-3)f\tilde{d}_{m-2k-3} + f'\tilde{c}_{m-2k-4}.
\end{equation}
Since $\tilde{d}_{m-2k-5}=f\tilde{c}_{m-2k-4}$, it follows that
$$
\tilde{d}_{m-2k-5}' = f'\tilde{c}_{m-2k-4} + f\tilde{c}_{m-2k-4}'.
$$
Combining this with \eqref{eqn:lemma2:1}, we get
$$
f\tilde{c}_{m-2k-4}' =-(m-2k-3)f\tilde{d}_{m-2k-3},
$$
and hence
\begin{equation}\label{eqn:lemma2:2}
\tilde{c}_{m-2k-4}' = -(m-2k-3)\tilde{d}_{m-2k-3}.
\end{equation}
Since $(\tilde{d}_{m-2k-5},\ldots,\tilde{c}_m)$ satisfies $e_{m-2k-3}$, we have
$$
\tilde{c}_{m-2k-4}'+(m-2k-2)f\tilde{c}_{m-2k-2}=\tilde{d}_{m-2k-3},
$$
and combining this with \eqref{eqn:lemma2:2} gives us~\eqref{eq:dm2k3tilde}.

We now show the opposite inclusion.  Suppose $(\tilde{d}_{m-2k-5},\ldots,\tilde{c}_m)$ satisfies~\eqref{eq:theothersolset}. 
Since the tuple satisfies $d_{m-2k-5}=fc_{m-2k-4}$, $e_{m-2k-3}$, and $d_{m-2k-3}=fc_{m-2k-2}$, we have
\begin{align*}
\tilde{d}_{m-2k-5}' &= f'\tilde{c}_{m-2k-4} + f\tilde{c}_{m-2k-4}'
\\ &= f'\tilde{c}_{m-2k-4} + f(-(m-2k-2)f\tilde{c}_{m-2k-2}+\tilde{d}_{m-2k-3})
\\ &= f'\tilde{c}_{m-2k-4} + f(-(m-2k-2)\tilde{d}_{m-2k-3} + \tilde{d}_{m-2k-3})
\\ &= f'\tilde{c}_{m-2k-4} - (m-2k-3)f\tilde{d}_{m-2k-3}.
\end{align*}
Thus the tuple also satisfies $e_{m-2k-4}$.  This completes the proof of ~\eqref{eqn:solset}.

Now we show \eqref{eqn:indstep}.
Since~\eqref{stepkplus1} 
is a system consisting of homogeneous linear differential equations and a homogeneous linear equation in $2k+6$ variables, the solution set is a $K$-vector subspace of $K[x]^{2k+6}$.  Let $W$ denote this vector space, let $\pi_i \colon K[x]^{2k+6} \rightarrow K[x]$ be projection onto the $i$-th coordinate, and let $\pi \colon K[x]^{2k+6} \rightarrow K[x]^{2k+4}$ be the projection onto the last $2k+4$ coordinates.  Similarly, the solution set of~\eqref{stepk}
is a $K$-vector subspace of $K[x]^{2k+4}$.  Call this space $V$.  By \eqref{eqn:ih}, $\dim V = k+2$. Let $p_i \colon K[x]^{2k+4} \rightarrow K[x]$ be the projection onto the $i$-th coordinate.

Let $a_1,\ldots,a_{k+2} \in K[x]^{2k+4}$ be a basis for $V$.  For each $i = 1,\ldots,k+2$, we define $b_i \in K[x]^{2k+6}$ as follows.  Let \[\pi(b_i) = a_i,\quad\pi_2(b_i)=\int (-(m-2k-2)fp_2(a_i) + p_1(a_i))dx,\quad \pi_1(b_i)=f\pi_2(b_i).\]  By \eqref{eqn:solset}, each $b_i$ is a solution of~\eqref{stepkplus1}.
Since $d_{m-2k-5}$ and $c_{m-2k-4}$ only appear in the equations 
\begin{gather*}
c_{m-2k-4}'+(m-2k-2)fc_{m-2k-2}=d_{m-2k-3},
\\d_{m-2k-5}'+(m-2k-3)fd_{m-2k-3} = f'c_{m-2k-4},\\
d_{m-2k-5}=fc_{m-2k-4}
\end{gather*}
of~\eqref{stepkplus1}, we observe that \[b_{k+3} := (f,1,0,\ldots,0)\in W.\]  We show that \[\text{span}_K\{b_1,\ldots,b_{k+3}\} = W.\]  Suppose $w\in W$.  By \eqref{eqn:solset}, $\pi(w) \in V$, so there exist $\alpha_i \in K$, $1\leqslant i\leqslant k+2$, such that \[\pi(w) = \sum_{i=1}^{k+2} \alpha_i\pi(b_i).\]  Also by \eqref{eqn:solset}, there is a $\beta \in K$ such that
\begin{align*}
\pi_2(w) &= \int \Big(-(m-2k-2)f\pi_4(w) + \pi_3(w)\Big)dx + \beta
\\ &= \int \Big(-(m-2k-2)f\sum_{i=1}^{k+2}\alpha_i\pi_4(b_i) + \sum_{i=1}^{k+2}\alpha_i\pi_3(b_i)\Big)dx + \beta
\\ &= \sum_{i=1}^{k+2} \alpha_i \int \Big(-(m-2k-2)f\pi_4(b_i) + \pi_3(b_i)\Big)dx + \beta
= \sum_{i=1}^{k+2} \alpha_i\pi_2(b_i) + \beta.
\end{align*}
By \eqref{eqn:solset}, we have $\pi_1(w) = f\pi_2(w)$.  Using the fact that $\pi_1(b_i) = f\pi_2(b_i)$, we get
\begin{equation*}
\pi_1(w) = \sum_{i=1}^{k+2}\alpha_i\pi_1(b_i) + f\beta.
\end{equation*}
Thus, \[w = \sum_{i=1}^{k+2}\alpha_ib_i + \beta b_{k+3}.\]  We conclude that $\text{span}_K\{b_1,\ldots,b_{k+3}\} = W$.

Since $\{\pi(b_1),\ldots,\pi(b_{k+2})\}$ is $K$-linearly independent,  $\{b_1,\ldots,b_{k+2}\}$ is $K$-linearly independent.  Since the constant term of $\pi_2(b_i)$ is $0$ for $i = 1,\ldots,k+2$, it is clear that \[b_{k+3} \not\in \text{span}_K\{b_1,\ldots,b_{k+2}\}.\]  We conclude that $\dim_KW = k+3$.  This completes the inductive step.

Setting $k = \frac{m-3}{2}$ in \eqref{eqn:indstatement} proves the lemma.
\end{proof}

\begin{lemma}\label{lem:02}
If $n \geqslant 3$ is odd, then $\gamma_1 = q\delta$ for some $q \in K[H]$.
\end{lemma}
\begin{proof}
Recall that, if $n \geqslant 3$ is odd, the coefficients of $\gamma_1$ must satisfy $(Io)_n$.  Observe that $\delta(H) = 0$.
Hence, any $K$-derivation $D$ of the form $$D\begin{pmatrix}
x\\y
\end{pmatrix}=\left(a_{\frac{n-1}{2}}H^{\frac{n-1}{2}}+a_{\frac{n-1}{2}-1}H^{\frac{n-1}{2}-1}+\ldots+a_0\right)\cdot\begin{pmatrix}y\\ f\end{pmatrix},\quad a_i\in K,$$
 commutes with $\delta$.  Writing $D$ in the form of \eqref{g as a polynomial}, we see that $c_i = 0$ for even $i$ and $d_i = 0$ for odd $i$, so a choice of $$a_0, \ldots, a_{\frac{n-1}{2}}$$ provides a solution to $(Io)_n$.  Moreover, two distinct choices of $a_0, \ldots, a_{\frac{n-1}{2}}$ provide two distinct solutions of $(Io)_n$.  Thus, the set of solutions of $(Io)_n$ that correspond to derivations of the form $q\delta$, where $q \in K[H]$, is a $K$-vector space of dimension $\frac{n+1}{2}$.  Since this vector space is contained in the vector space of solutions to $(Io)_n$, which by Lemma~\ref{lem:01} has dimension $\frac{n+1}{2}$, the spaces must be equal.
\end{proof}

\begin{lemma}
\label{lem:Ie implies d_m=0}
For all even $m \geqslant 2$, the system $(Ie)_m$ implies $d_m = 0$.
\end{lemma}
\begin{proof}
Fix even $m \geqslant 2$.  Label the equations in $(Ie)_m$ as follows:
\begin{center}
\begin{tabular}{ll}
$e_{m+1}:$ & $d_m' = 0$ \\
$e_m:$ & $c_{m-1}' = d_m$ \\
$e_{m-1}:$ & $d_{m-2}' + mfd_m = f'c_{m-1}$ \\
$e_{m-2}:$ & $c_{m-3}' + (m-1)fc_{m-1} = d_{m-2}$ \\
 & $\vdots$ \\
$e_1:$ & $d_0' + 2fd_2 = f'c_1$ \\
$e_0:$ & $fc_1 = d_0$
\end{tabular}
\end{center}
We show by induction on $k$, $0\leqslant k \leqslant \frac{m-2}{2}$, that 
\begin{equation}\label{eq:implies}
\{e_0,e_1,\ldots,e_{2k+1}\}\quad\text{implies}\quad c_{2k+1}'=-(2k+2)d_{2k+2}.
\end{equation}
The case $k=0$ is straightforward.  For the inductive hypothesis, fix $k$, $0 \leqslant k < \frac{m-2}{2}$, and assume~\eqref{eq:implies}. 
Now assume $\{e_0,e_1,\ldots,e_{2k+3}\}$. 
Equations $e_{2k+2}$ and $e_{2k+3}$ are
$$
c_{2k+1}' = -(2k+3)fc_{2k+3}+d_{2k+2}\quad\text{and}\quad d_{2k+2}'=-(2k+4)fd_{2k+4}+f'c_{2k+3},
$$
and the inductive hypothesis gives us $$c_{2k+1}' = -(2k+2)d_{k+2}.$$  Equating the two expressions for $c_{2k+1}'$, we obtain $d_{2k+2} = fc_{2k+3}$.  Differentiating this and equating the two expressions for $d_{2k+2}'$ gives us
$$
f'c_{2k+3}+fc_{2k+3}' = -(2k+4)fd_{2k+4}+f'c_{2k+3},
$$
which implies $$c_{2k+3}' = -(2k+4)d_{2k+4}.$$  This completes the inductive step.
This shows that a consequence of $(Ie)_m$ is $$c_{m-1}' = -md_m.$$  Since $m$ was assumed to be even, we have $m\neq -1$.  In order that $e_m$ and $c_{m-1}' = -md_m$ both be satisfied, it is necessary that $d_m = 0$.
\end{proof}

\begin{corollary}\label{cor:nnoteven}
It is impossible that $n$ is an even integer greater than or equal to $2$.
\end{corollary}
\begin{proof}
Suppose $n \geqslant 2$ and $n$ is even.  Then the coefficients of $\gamma_1$ must satisfy $(Ie)_n$, and also $d_n \neq 0$.  But by Lemma~\ref{lem:Ie implies d_m=0}, 
$d_n = 0$
is a consequence of $(Ie)_n$.  
\end{proof}

\begin{lemma}
\label{lem:IIe implies c_m=0}
For all even $m \geqslant 2$, the system $(IIe)_m$ implies $c_m = 0$.
\end{lemma}
\begin{proof}
Fix even $m \geqslant 2$.  Label the equations of $(IIe)_m$ as follows:
\begin{center}
\begin{tabular}{ll}
$e_{m+1}:$ & $c_m' = 0$ \\
$e_m:$ & $d_{m-1}' = f'c_m$ \\
$e_{m-1}:$ & $c_{m-2}' + mfc_m = d_{m-1}$ \\
$e_{m-2}:$ & $d_{m-3}' + (m-1)fd_{m-1} = f'c_{m-2}$ \\
 & $\vdots$ \\
$e_1:$ & $c_0' + 2fc_2 = d_1$ \\
$e_0:$ & $fd_1 = f'c_0$
\end{tabular}
\end{center}

We first show the following by induction on $k$, $0\leqslant k \leqslant \frac{m-2}{2}$:
\begin{equation}\label{lemma5ind}
\text{\parbox{.85\textwidth}{If $(\tilde{d}_{m-2k-1},\ldots,\tilde{c}_m)$ is a solution of $\{e_{m+1},\ldots,e_{m-2k}\}$ with $\tilde{c}_m\neq 0$, \newline then $\tilde{d}_{m-2k-1} \neq 0$, $\deg(\tilde{d}_{m-2k-1})=\deg(f\cdot\tilde{c}_{m-2k})$, and $\lc (\tilde{d}_{m-2k-1})=\lc (f\cdot\tilde{c}_{m-2k})$.}}
\end{equation}

\noindent \underline{Base Case, $k=0$:}

Suppose $(\tilde{d}_{m-1},\tilde{c}_m)$ is a solution of $\{c_m' = 0, d_{m-1}' = f'c_m\}$ and $c_m \neq 0$.  
Since $\deg f \geqslant 2$ and $\tilde{c}_m$ is a non-zero constant,  \[\tilde{d}_{m-1} \neq 0\quad \text{and}\quad \deg \tilde{d}_{m-1} = \deg (f\tilde{c}_m) = \deg f.\]  We have $\lc (\tilde{d}_{m-1}') = \deg f \cdot \lc f \cdot \tilde{c}_m$.  Since $\tilde{c}_m$ is a constant and $\deg\tilde{d}_{m-1} = \deg f$, we have \[\lc (\tilde{d}_{m-1}) = \lc (f\tilde{c}_m).\]

\noindent \underline{Inductive Step:}

Fix $k$, $0 \leqslant k < \frac{m-2}{2}$.  Assume~\eqref{lemma5ind} for this $k$. 
Suppose $(\tilde{d}_{m-2k-3},\ldots,\tilde{c}_m)$ is a solution of $\{e_{m+1},\ldots,e_{m-2k-2}\}$ such that $\tilde{c}_m\neq 0$.  For ease of reference, we write:
\begin{center}
\begin{tabular}{ll}
$e_{m-2k-1}:$ & $c_{m-2k-2}' + (m-2k)\cdot f\cdot c_{m-2k} = d_{m-2k-1}$ \\
$e_{m-2k-2}:$ & $d_{m-2k-3}' + (m-2k-1)\cdot f\cdot d_{m-2k-1} = f'\cdot c_{m-2k-2}$
\end{tabular}
\end{center}
Then 
$$
\tilde{c}_{m-2k-2}' = \tilde{d}_{m-2k-1} - (m-2k)f\cdot\tilde{c}_{m-2k}.
$$
Since $m$ is even, 
$m-2k-1\ne 0$.  Therefore, by the inductive hypothesis, 
\begin{equation}\label{eqn:lemma5:1}
\deg(\tilde{c}_{m-2k-2}') = \deg(\tilde{d}_{m-2k-1}) \geqslant 0
\end{equation}
and we have
\begin{equation*}
\lc (\tilde{c}_{m-2k-2}') = -(m-2k-1)\cdot \lc (\tilde{d}_{m-2k-1}),
\end{equation*}
and hence
\begin{equation}\label{eq:forlcdm}
\deg \tilde{c}_{m-2k-2} \cdot \lc (\tilde{c}_{m-2k-2}) = -(m-2k-1)\cdot \lc (\tilde{d}_{m-2k-1}).
\end{equation}
By equation $e_{m-2k-2}$, we have
\begin{equation}\label{eqn:lemma5:2}
\tilde{d}_{m-2k-3}' = f'\cdot\tilde{c}_{m-2k-2} - (m-2k-1)\cdot f\cdot\tilde{d}_{m-2k-1}.
\end{equation}
We will show that the degrees of the two terms on the right-hand side of \eqref{eqn:lemma5:2} are equal and that their leading coefficients do not cancel.  From \eqref{eqn:lemma5:1}, it follows that \[\deg\tilde{c}_{m-2k-2} = \deg\tilde{d}_{m-2k-1} + 1,\] so that
\begin{equation}\label{eqn:lemma5:3}
\deg(f'\cdot\tilde{c}_{m-2k-2}) = \deg(f\cdot\tilde{d}_{m-2k-1}).
\end{equation}
Observe that
$$
\lc (f'\cdot\tilde{c}_{m-2k-2}) = \deg f \cdot \lc f \cdot \lc (\tilde{c}_{m-2k-2})
$$
and, using~\eqref{eq:forlcdm},
$$
\lc (f\cdot\tilde{d}_{m-2k-1}) = \lc f \cdot \lc (\tilde{d}_{m-2k-1}) = \lc f \cdot \tfrac{-1}{m-2k-1}\cdot \lc (\tilde{c}_{m-2k-2})\cdot \deg\tilde{c}_{m-2k-2}.
$$
It follows that 
\begin{equation}\label{eqn:lemma5:4}
\lc (f'\cdot\tilde{c}_{m-2k-2}) \neq (m-2k-1)\cdot\lc (f\cdot\tilde{d}_{m-2k-1}),
\end{equation}
and, together with \eqref{eqn:lemma5:2} and \eqref{eqn:lemma5:3}, this gives us
\begin{equation}\label{eqn:lemma5:5}
\lc (\tilde{d}_{m-2k-3}') = \lc f \cdot \lc (\tilde{c}_{m-2k-2})\cdot(\deg f + \deg\tilde{c}_{m-2k-2}).
\end{equation}
By 
\eqref{eqn:lemma5:2}, \eqref{eqn:lemma5:3}, and \eqref{eqn:lemma5:4}, we have
\begin{equation}\label{eqn:lemma5:6}
\deg(\tilde{d}_{m-2k-3}) = \deg f + \deg\tilde{c}_{m-2k-2}.
\end{equation}
Combining \eqref{eqn:lemma5:5} and \eqref{eqn:lemma5:6} gives us
$$
\lc (\tilde{d}_{m-2k-3}) = \lc f\cdot \lc (\tilde{c}_{m-2k-2}).
$$
This completes the inductive step.

We proceed with the proof of the lemma.  Let $(\tilde{c}_0,\ldots,\tilde{c}_m)$ be a solution of $(IIe)_m$ with $\tilde{c}_m \neq 0$.  We will derive a contradiction.  It follows immediately that $(\tilde{d}_1,\ldots,\tilde{c}_m)$ is a solution of $\{e_{m+1},\ldots,e_1\}$.  Setting $k = \frac{m-2}{2}$ in~\eqref{lemma5ind}, we have that $\deg(\tilde{d}_1) = \deg(f\cdot\tilde{c}_2) \geqslant 0$ and 
\begin{equation}\label{eqn:lc}
\lc (\tilde{d}_1) = \lc (f)\cdot \lc (\tilde{c}_2).
\end{equation}
From $e_0$, we see that \[\deg(\tilde{d}_1) = \deg(\tilde{c}_0) - 1 = \deg(\tilde{c}_0').\]  By equation $e_1$, we have
$$
\lc (\tilde{d}_1) = 2\cdot \lc f \cdot \lc (\tilde{c}_2) + \deg \tilde{c}_0 \cdot \lc (\tilde{c}_0).
$$
Therefore, by \eqref{eqn:lc}, we have
$$
\lc f \cdot \lc (\tilde{c}_2) = 2\cdot \lc f \cdot \lc (\tilde{c}_2) + \deg \tilde{c}_0 \cdot \lc (\tilde{c}_0)
$$
and hence
$$
\lc (\tilde{c}_0) = \frac{-\lc f \cdot \lc (\tilde{c}_2)}{\deg\tilde{c}_0}.
$$
By equation $e_0$, we have
\begin{align*}
\lc f \cdot \lc (\tilde{d}_1) &= \deg f \cdot \lc f \cdot \lc (\tilde{c}_0)
= \deg f \cdot \lc f \cdot \left(\frac{-\lc f \cdot \lc (\tilde{c}_2)}{\deg\tilde{c}_0}\right).
\end{align*}
By \eqref{eqn:lc},
$$
\lc f \cdot \lc f \cdot \lc (\tilde{c}_2) = \deg f \cdot \lc f \cdot \left(\frac{-\lc f \cdot \lc (\tilde{c}_2)}{\deg\tilde{c}_0}\right).
$$
It follows that
$$
\deg\tilde{c}_0 = -\deg f,
$$
which is a contradiction, since $\deg f>0$.
\end{proof}

\begin{corollary}\label{cor:pnoteven}
It is impossible that $p$ is an even integer greater than or equal to $2$.
\end{corollary}
\begin{proof}
Suppose $p \geqslant 2$ and $p$ is even.  Then the coefficients of $\gamma_2$ must satisfy $(IIe)_p$, together with $c_p \neq 0$.  But by Lemma~\ref{lem:IIe implies c_m=0}, $(IIe)_p$ implies $c_p=0$.  
\end{proof}

In the lemmas that follow, we refer to $K$-derivations on the ring $K[x^{1/t},x^{-1/t},y]$, where $t$ is a positive integer.  We view this ring as isomorphic to
$$
K[x,y,z,w] / (z^t - x, zw - 1).
$$
By \cite[Lemma~II.2.1]{Kol}, 
since $\Char K = 0$, any $K$-derivation on $K[x,y]$ extends uniquely to a $K$-derivation on $K[x^{1/t},x^{-1/t},y]$.  One consequence of this is that a $K$-derivation on $K[x^{1/t},x^{-1/t},y]$ can be defined by stating its action on $x$ and $y$.

\begin{lemma}\label{lem:polynomial}
For every odd integer $m$ greater than or equal to $3$, there exists $P_m(X) \in \Z[X]\setminus\{0\}$ such that:
\begin{itemize}
\item $\deg P_m \leqslant \frac{m+1}{2}$
\item for every
\begin{itemize}
\item positive integer $t$
\item $h \in K[x^{1/t},x^{-1/t}]\backslash\{0\}$,
\end{itemize}
if the $K$-derivation 
\[
\beta\begin{pmatrix}x \\ y\end{pmatrix} = \begin{pmatrix}c_{m-1}y^{m-1}+c_{m-3}y^{m-3}+\ldots+c_0 \\ d_my^m+d_{m-2}y^{m-2}+\ldots+d_1y\end{pmatrix}
\] on $K[x^{1/t},x^{-1/t},y]$
commutes with the $K$-derivation $$
\alpha\begin{pmatrix} x \\ y\end{pmatrix} = \begin{pmatrix}y \\ h\end{pmatrix}
$$ on $K[x^{1/t},x^{-1/t},y]$,   then \[P_m(N) = 0\quad \text{or}\quad N \in \{-1\} \cup \left\{-\tfrac{k}{k-1}\:\Big|\: 2\leqslant k \leqslant \tfrac{m+1}{2}\right\},\]
where $N = \deg h$, each $c_i, d_i \in K[x^{1/t},x^{-1/t}]$ and $d_m \neq 0$.
\end{itemize}
\end{lemma}

\begin{proof}
Fix $m \geqslant 3$.  For $i=0,\ldots,m$, we define $T_i(X) \in \Z [X]$ as follows.  Let \[T_m(X) = T_{m-1}(X) = 1.\]  For $1 \leqslant k \leqslant \frac{m-1}{2}$, let
\begin{equation}\label{eqn:lemma6:defta}
T_{m-2k}(X) = X \cdot T_{m-(2k-1)}(X) - (m-(2k-2)) \cdot ((k-1)\cdot(X+1)+1) \cdot T_{m-(2k-2)}(X)
\end{equation}
and let
\begin{equation}\label{eqn:lemma6:deftb}
T_{m-(2k+1)}(X) = T_{m-2k}(X) - (m-(2k-1)) \cdot k\cdot(X+1) \cdot T_{m-(2k-1)}(X).
\end{equation}
Let
\begin{equation}\label{eqn:lemma6:defpm}
P_m(X) = \left(\tfrac{m-1}{2} \cdot (X+1) + 1\right) \cdot T_1(X) - X \cdot T_0(X).
\end{equation}

We first prove that 
\begin{equation}\label{eq:degPm}
\deg P_m(X) \leqslant \tfrac{m+1}{2}.
\end{equation}
We show by induction on $k$, $0 \leqslant k \leqslant \frac{m-1}{2}$, that
\begin{equation}\label{eq:Tm}
\deg T_{m-2k}(X) \leqslant k\ \text{ and }\ \deg T_{m-(2k+1)}(X) \leqslant k.
\end{equation}
For the base case, $k=0$, we have \[\deg T_m(X) = \deg T_{m-1}(X) = 0.\]  For the inductive step, fix $k$, $0 \leqslant k < \frac{m-1}{2}$, and assume~\eqref{eq:Tm}.  
It follows from \eqref{eqn:lemma6:defta} and the inductive hypothesis that \[\deg T_{m-(2k+2)}(X) \leqslant k+1,\] and it follows from \eqref{eqn:lemma6:deftb} and the inductive hypothesis that \[\deg T_{m-(2k+3)}(X) \leqslant k+1.\]  This completes the proof by induction.  As a consequence, we have \[\deg T_1(X) \leqslant \tfrac{m-1}{2}\quad \text{and}\quad \deg T_0(X) \leqslant \tfrac{m-1}{2}.\]  Therefore,~\eqref{eq:degPm} holds.
Next, we show that $P_m(X)$ is not the zero polynomial.  To this end, we first prove by induction on $k$, $0 \leqslant k \leqslant \frac{m-1}{2}$, that 
\begin{equation}\label{eqn:lemma6:tnotzero}
T_{m-2k}(-1) \neq 0\quad \text{and}\quad T_{m-(2k+1)}(-1) \neq 0.
\end{equation}
The base case, $k=0$, is trivial, since $T_m(X) = T_{m-1}(X) = 1$.  For the inductive hypothesis, fix $k$, $0 \leqslant k < \frac{m-1}{2}$, and assume \[T_{m-2k}(-1) \cdot T_{m-(2k+1)}(-1) \neq 0.\]  Equation \eqref{eqn:lemma6:deftb} shows that 
$$
T_{m-(2k+1)}(-1) = T_{m-2k}(-1).
$$
Replacing $k$ with $k+1$ in \eqref{eqn:lemma6:defta} gives us
\begin{align*}
T_{m-(2k+2)}(-1) &= -1 \cdot T_{m-(2k+1)}(-1) - (m-2k) \cdot T_{m-2k}(-1)
= -(m-2k+1) \cdot T_{m-2k}(-1).
\end{align*}
Since $k < \frac{m-1}{2}$, it must be that $m-2k+1 \neq 0$.  Now by the inductive hypothesis, \[T_{m-(2k+2)}(-1) \neq 0.\]  Replacing $k$ with $k+1$ in \eqref{eqn:lemma6:deftb} yields
$$
T_{m-(2k+3)}(-1) = T_{m-(2k+2)}(-1) \neq 0.
$$
This completes the proof of \eqref{eqn:lemma6:tnotzero}.
By \eqref{eqn:lemma6:defpm}, we have
$$
P_m(-1) = T_1(-1) + T_0(-1).
$$
Replacing $k$ with $\frac{m-1}{2}$ in \eqref{eqn:lemma6:deftb} gives
$$
T_0(-1) = T_1(-1),
$$
and hence
$$
P_m(-1) = 2 \cdot T_1(-1) \neq 0.
$$
This completes the proof that $P_m(X)$ is not the zero polynomial.

We proceed to show that $P_m(X)$ satisfies the remaining property stated in the lemma.  Fix $t \in \mathbb{Z}^{\geq 1}$, fix $h \in K[x^{1/t},x^{-1/t}]\backslash\{0\}$, and define $\alpha$ as in the statement of the lemma.  Fix $\beta$ as in the statement of the lemma.  Note that $c_i$ and $d_i$ must satisfy the equations of system $(IIo)_m$, with $f$ replaced by $h$.  Label these equations as follows:
\begin{center}
\begin{tabular}{ll}
$e_{m+1}:$ & $d_m'= 0$ \\
$e_m:$ & $c_{m-1}'= d_m$ \\
$e_{m-1}:$ & $d_{m-2}'+ mhd_m = h'c_{m-1}$ \\
$e_{m-2}:$ & $c_{m-3}' + (m-1)hc_{m-1} = d_{m-2}$ \\
\vdots & \vdots \\
$e_{m-(2k-1)}:$ & $d_{m-2k}' + (m-(2k-2))hd_{m-(2k-2)} = h'c_{m-(2k-1)}$ \\
$e_{m-2k}:$ & $c_{m-(2k+1)}' + (m-(2k-1))hc_{m-(2k-1)} = d_{m-2k}$ \\
$e_{m-(2k+1)}:$ & $d_{m-(2k+2)}' + (m-2k)hd_{m-2k} = h'c_{m-(2k+1)}$ \\
$e_{m-(2k+2)}:$ & $c_{m-(2k+3)}' + (m-(2k+1))hc_{m-(2k+1)} = d_{m-(2k+2)}$ \\
\vdots & \vdots \\
$e_0:$ & $hd_1 = h'c_0$
\end{tabular}
\end{center}
Let $N = \deg h$ and let $L = \lc (h)$.  Assume that
$$
N \not\in \{-1\} \cup \left\{-\tfrac{k}{k-1}\:\Big|\: 2\leqslant k \leqslant \tfrac{m+1}{2}\right\}.
$$
We first show by induction that for all $k$, $0 \leqslant k \leqslant \frac{m-1}{2}$,
\begin{equation}\label{eqn:lemma6:deg}
\deg d_{m-2k} \leqslant k(N+1) \quad \text{ and } \quad \deg c_{m-(2k+1)} \leqslant k(N+1)+1.
\end{equation}
We first treat the base case, 
$k=0$.  By equations $e_{m+1}$ and $e_m$,  $\deg d_m \leqslant 0$ and $\deg c_{m-1} \leqslant 1$.

For the inductive hypothesis, fix $k$, $0 \leqslant k < \frac{m-1}{2}$ and assume~\eqref{eqn:lemma6:deg}.
Consider $e_{m-(2k+1)}$.  By the inductive hypothesis, we have \[\deg (hd_{m-2k}) \leqslant k(N+1)+N\quad \text{and}\quad \deg (h'c_{m-(2k+1)}) \leqslant k(N+1)+N.\]  It follows that
\begin{equation}\label{eqn:lemma6:deg:1}
\deg d_{m-(2k+2)} \leqslant (k+1)(N+1).
\end{equation}
Now consider $e_{m-(2k+2)}$.  By the inductive hypothesis, \[\deg (hc_{m-(2k+1)}) \leqslant (k+1)(N+1).\]  It follows from this and \eqref{eqn:lemma6:deg:1} that
$$
\deg c_{m-(2k+3)} \leqslant (k+1)(N+1)+1.
$$
This concludes the proof of \eqref{eqn:lemma6:deg} for all $k$, $0 \leqslant k \leqslant \frac{m-1}{2}$.

Define $a_m, a_{m-1},\ldots,a_0$ as follows.  Let 
\begin{gather*}a_{m-2k} = \text{ the coefficient of } x^ {k(N+1)} \text{ in } d_{m-2k},\\
a_{m-(2k+1)} = \text{ the coefficient of } x^{k(N+1)+1} \text{ in } c_{m-(2k+1)}.
\end{gather*}
Equations $e_{m+1}$ and $e_m$ and the requirement that $d_m \neq 0$ imply that $a_{m-1} = a_m$.  Now we prove that, for all $k$, $1 \leqslant k \leqslant \frac{m-1}{2}$,
\begin{equation}\label{eqn:lemma6:prelim1}
a_{m-(2k+1)}=\left(a_{m-2k}-(m-(2k-1))\cdot L\cdot a_{m-(2k-1)}\right)\cdot\tfrac{1}{k(N+1)+1}
\end{equation}
and
\begin{equation}\label{eqn:lemma6:prelim2}
a_{m-2k} = (L\cdot N\cdot a_{m-(2k-1)}-(m-(2k-2))\cdot L\cdot a_{m-(2k-2)})\cdot\tfrac{1}{k(N+1)}.
\end{equation}
Fix $k$, $1 \leqslant k \leqslant \frac{m-1}{2}$.  By equation $e_{m-(2k-1)}$, we have
\begin{equation}\label{eqn:lemma6:2}
d_{m-2k}'=h'c_{m-(2k-1)}-(m-(2k-2))\cdot h\cdot d_{m-(2k-2)}.
\end{equation}
Let us write an equation equating the coefficients of $x^{k(N+1)-1}$ on both sides of \eqref{eqn:lemma6:2}. First, observe that the coefficient of $x^{k(N+1)-1}$ in $d_{m-2k}'$ is $k(N+1) \cdot a_{m-2k}$.  Next consider $h'c_{m-(2k-1)}$.  First consider the case $N \neq 0$.  It follows that $\deg h' = N - 1$.  By \eqref{eqn:lemma6:deg}, we have \begin{equation}
\label{eq:degcm2k1}
\deg c_{m-(2k-1)} \leqslant k(N+1)-N.
\end{equation}
Thus, the coefficient of $x^{k(N+1)-1}$ in $h'c_{m-(2k-1)}$ is $N \cdot L \cdot a_{m-(2k-1)}$.  Now consider the case $N=0$. Either $h'=0$, or $h'\neq 0$ and $\deg h' < N-1$.  If $h'=0$, then $h'c_{m-(2k-1)} = 0$ and the coefficient of $x^{k(N+1)-1}$ in $h'c_{m-(2k-1)}$ is $0$, which is equal to $L \cdot N \cdot a_{m-(2k-1)}$.  If $N=0$ and $h' \neq 0$, then, since $\deg h' < N-1$ and by~\eqref{eq:degcm2k1},  the coefficient of $x^{k(N+1)-1}$ in $h'c_{m-(2k-1)}$ is $0$, which is equal to $L \cdot N \cdot a_{m-(2k-1)}$.  Finally, consider $h \cdot d_{m-(2k-2)}$.  Since $\deg h = N$ and, by \eqref{eqn:lemma6:deg}, \[\deg d_{m-(2k-2)} \leqslant k(N+1)-N-1,\] we see that the coefficient of $x^{k(N+1)-1}$ in $h \cdot d_{m-(2k-2)}$ is $L \cdot a_{m-(2k-2)}$.  Since $N\neq -1$, we have $k(N+1) \neq 0$.  Thus, equating the coefficients of $x^{k(N+1)-1}$ in \eqref{eqn:lemma6:prelim2} yields 
\begin{equation*}\label{eqn:ind1}a_{m-2k} = (L\cdot N\cdot a_{m-(2k-1)}-(m-(2k-2))\cdot L\cdot a_{m-(2k-2)})\cdot\tfrac{1}{k(N+1)}.\end{equation*}
By equation $e_{m-2k}$, we have
\begin{equation}\label{eqn:lemma6:3}
c_{m-(2k+1)}'=d_{m-2k}-(m-(2k-1))\cdot h\cdot c_{m-(2k-1)}.
\end{equation}
Let us write an equation equating the coefficients of $x^{k(N+1)}$ on either side of \eqref{eqn:lemma6:3}.  The coefficient of $x^{k(N+1)}$ in $c_{m-(2k+1)}'$ is $(k(N+1)+1) \cdot a_{m-(2k+1)}$.  The coefficient of $x^{k(N+1)}$ in $d_{m-2k}$ is $a_{m-2k}$.  By \eqref{eqn:lemma6:deg}, we have \[\deg c_{m-(2k-1)} \leqslant k(N+1)-N,\] and, since $\deg h = N$, the coefficient of $x^{k(N+1)}$ in $hc_{m-(2k-1)}$ is $L \cdot a_{m-(2k-1)}$.  Since $N\neq -\frac{k+1}{k}$, we have $k(N+1)+1 \neq 0$.  Thus, equating the coefficients of $x^{k(N+1)}$ on either side of \eqref{eqn:lemma6:3} yields
\begin{equation*}\label{eqn:ind2}a_{m-(2k+1)}=\left(a_{m-2k}-(m-(2k-1))\cdot L\cdot a_{m-(2k-1)}\right)\cdot\tfrac{1}{k(N+1)+1}.\end{equation*}
This concludes the proof of \eqref{eqn:lemma6:prelim1} and \eqref{eqn:lemma6:prelim2}.

For $i = 0, \ldots, m$, define $S_i \in \Z$ as follows.  Let \[S_m = S_{m-1} = 1.\]  For every $k$, $1 \leqslant k \leqslant \frac{m-1}{2}$, let \[S_{m-2k} = k(N+1)\cdot S_{m-(2k-1)}\quad\text{and}\quad S_{m-(2k+1)} = (k(N+1)+1)\cdot S_{m-2k}.\]

Next, we prove by induction that for all $k$, $0 \leqslant k \leqslant \frac{m-1}{2}$, we have
\begin{equation}\label{eqn:lemma6:tn1}
T_{m-2k}(N) = S_{m-2k}\cdot \tfrac{1}{L^k} \cdot \tfrac{1}{a_m} \cdot a_{m-2k}\ \text{ and }\  T_{m-(2k+1)}(N) = S_{m-(2k+1)} \cdot \tfrac{1}{L^k} \cdot \tfrac{1}{a_m} \cdot a_{m-(2k+1)}.
\end{equation}
Recall that by our assumption on the form of $\beta$, we have $a_m \neq 0$.

The base case, $k=0$, is proved immediately by noting that $a_m = a_{m-1}$ follows from $e_{m+1}$ and $e_m$.

For the inductive hypothesis, fix $k$, $0 \leqslant k < \frac{m-1}{2}$ and assume \eqref{eqn:lemma6:tn1} holds.  We have from \eqref{eqn:lemma6:defta}, \eqref{eqn:lemma6:tn1}, and the definition of $S_i$ that
\begin{align*}
T_{m-(2k+2)}(N) &= N \cdot T_{m-(2k+1)}(N) - (m-2k) (k(N+1)+1)\cdot T_{m-2k}(N)
\\ &= N \cdot S_{m-(2k+1)} \cdot \tfrac{1}{L^k} \cdot \tfrac{1}{a_m} \cdot a_{m-(2k+1)} - (m-2k)(k(N+1)+1) \cdot S_{m-2k} \cdot \tfrac{1}{L^k} \cdot \tfrac{1}{a_m} \cdot a_{m-2k}
\\ &= N \cdot \tfrac{S_{m-(2k+2)}}{(k+1)(N+1)} \cdot \tfrac{1}{L^k} \cdot \tfrac{1}{a_m} \cdot a_{m-(2k+1)} - (m-2k) \cdot \tfrac{S_{m-(2k+2)}}{(k+1)(N+1)} \cdot \tfrac{1}{L^k} \cdot \tfrac{1}{a_m} \cdot a_{m-2k}
\\ &= S_{m-(2k+2)} \cdot \tfrac{1}{L^k} \cdot \tfrac{1}{a_m} \cdot \left(Na_{m-(2k+1)} - (m-2k)a_{m-2k}\right) \cdot \tfrac{1}{(k+1)(N+1)}.
\end{align*}
From \eqref{eqn:lemma6:prelim2} with $k$ replaced by $k+1$, we see that
\begin{equation}\label{eqn:lemma6:ind1part1}
T_{m-(2k+2)}(N) = S_{m-(2k+2)} \cdot \tfrac{1}{L^{k+1}}\cdot \tfrac{1}{a_m} \cdot a_{m-(2k+2)}.
\end{equation}
We have from \eqref{eqn:lemma6:deftb}, \eqref{eqn:lemma6:ind1part1}, \eqref{eqn:lemma6:tn1}, and the definition of $S_i$ that
\begin{align*}
&T_{m-(2k+3)}(N) = T_{m-(2k+2)}(N) - (m-(2k+1))\cdot (k+1)(N+1) \cdot T_{m-(2k+1)}(N)
\\& = S_{m-(2k+2)} \cdot \tfrac{1}{L^{k+1}} \cdot \tfrac{1}{a_m} \cdot a_{m-(2k+2)} - (m-(2k+1)) \cdot (k+1)(N+1) \cdot S_{m-(2k+1)} \cdot \tfrac{1}{L^k} \cdot \tfrac{1}{a_m} \cdot a_{m-(2k+1)}
\\ &= S_{m-(2k+3)} \cdot \tfrac{1}{L^{k+1}} \cdot \tfrac{1}{a_m} \cdot \left( a_{m-(2k+2)} - L(m-(2k+1))a_{m-(2k+1)} \right) \cdot \tfrac{1}{(k+1)(N+1) + 1}.
\end{align*}
From \eqref{eqn:lemma6:prelim1} with $k$ replaced by $k+1$, we see that
$$
T_{m-(2k+3)}(N) = S_{m-(2k+3)} \cdot \tfrac{1}{L^{k+1}} \cdot \tfrac{1}{a_m} \cdot a_{m-(2k+3)}.
$$
This completes the proof of \eqref{eqn:lemma6:tn1}.

Now we show that $P_m(N) = 0$.  Using $k = \frac{m-1}{2}$ in \eqref{eqn:lemma6:tn1} and  $S_0 = (\tfrac{m-1}{2}(N+1)+1)S_1$, we have
\begin{align*}
P_m(N) &= \left(\tfrac{m-1}{2} (N+1) + 1\right) \cdot T_1(N) - N \cdot T_0(N)
\\ &= \left(\tfrac{m-1}{2} (N+1) + 1\right) S_1 \cdot \tfrac{1}{L^{(m-1)/2}} \cdot \tfrac{1}{a_m} \cdot a_1 - N \cdot S_0 \cdot \tfrac{1}{L^{(m-1)/2}} \cdot \tfrac{1}{a_m} \cdot a_0 = \tfrac{S_0}{L^{(m-1)/2}} \cdot \tfrac{1}{a_m} \cdot \left( a_1 - Na_0 \right).
\end{align*}
Consider equation $e_0$:
$$
hd_1 = h'c_0.
$$
Equating the coefficients of $x^{(N+1)((m-1)/2)+N}$ in $e_0$, recalling \eqref{eqn:lemma6:deg}, yields
$$
a_1 = Na_0.
$$
We conclude that $P_m(N) = 0$.
\end{proof}

\begin{lemma}\label{lem:positive}
For every  positive integer $k$,
the $K$-derivation $$\alpha\begin{pmatrix}x \\ y\end{pmatrix} = \begin{pmatrix}y \\ x^{-\frac{2k+1}{2k-1}}\end{pmatrix}$$ of the ring $K[x^{-\frac{1}{2k-1}},x^{\frac{1}{2k-1}},y]$ commutes with the $K$-derivation $$\beta\begin{pmatrix}x \\ y\end{pmatrix} = \begin{pmatrix}\sum_{l=0}^ka_{2(k-l)}x^{1+(1-\frac{2k+1}{2k-1})l}y^{2(k-l)} \\ \sum_{l=0}^k a_{2(k-l)+1}x^{(1-\frac{2k+1}{2k-1})l}y^{2(k-l)+1}\end{pmatrix},$$ where the $a_i \in K$ are defined recursively as follows:  $a_{2k+1} \in K\setminus \{0\}$ is arbitrary, $a_{2k} = a_{2k+1}$, and for $0<l\leqslant k$,
\begin{gather*}
a_{2(k-l)+1} = \left(-\tfrac{2k+1}{2k-1}a_{2(k-l)+2}-(2(k-l)+3)a_{2(k-l)+3}\right)\left((1-\tfrac{2k+1}{2k-1})l\right)^{-1}\\
a_{2(k-l)} = \left(a_{2(k-l)+1}-(2(k-l)+2)a_{2(k-l)+2}\right)\left((1-\tfrac{2k+1}{2k-1})l+1\right)^{-1}.
\end{gather*}
\end{lemma}

\begin{proof}

We first show that \[\beta(\alpha(x)) = \alpha(\beta(x)).\]  We have $\beta(\alpha(x)) = \beta(y)$.  Note that, in $\beta(y)$, only odd powers of $y$ with exponents less than or equal to $2k+1$ appear, and for all $l$, $0 \leqslant l \leqslant k$, the coefficient of $y^{2(k-l)+1}$ is
\begin{equation}\label{eq:betayl}
a_{2(k-l)+1}x^{(1-\frac{2k+1}{2k-1})l}.
\end{equation}
In $\alpha(\beta(x))$, only odd powers of $y$ with exponents less than or equal to $2k+1$ appear.  The coefficient of $y^{2k+1}$ is $a_{2k}$, which equals $a_{2k+1}$, which is the coefficient of $y^{2k+1}$ in $\beta(\alpha(x))$.  For all $l$, $1 \leqslant l \leqslant k$, the coefficient of $y^{2(k-l)+1}$ in $\alpha(\beta(x))$ is 
$$
a_{2(k-l)}x^{\left(1-\frac{2k+1}{2k-1}\right)l}\left(1+(1-\tfrac{2k+1}{2k-1})l\right) + a_{2(k-l)+2}x^{\left(1-\frac{2k+1}{2k-1}\right)l}(2(k-l)+2).
$$  
By the definition of $a_{2(k-l)}$, this equals~\eqref{eq:betayl}.  
Now we show that \[\beta(\alpha(y)) = \alpha(\beta(y)).\]  We have $$\beta(\alpha(y)) = \beta\left(x^{-\frac{2k+1}{2k-1}}\right) = -\tfrac{2k+1}{2k-1}x^{-\frac{2k+1}{2k-1}-1}\beta(x).$$  This expression contains only even powers of $y$ from $y^0$ to $y^{2k}$.  For all $l$, $0 \leqslant l \leqslant k$, the coefficient of $y^{2(k-l)}$ in $\beta(\alpha(y))$ is 
\begin{equation}\label{eq:y2kl}
-\tfrac{2k+1}{2k-1}a_{2(k-l)}x^{\left(1-\frac{2k+1}{2k-1}\right)l-\frac{2k+1}{2k-1}}.
\end{equation}
We see that $\alpha(\beta(y))$ contains only even powers of $y$ from $y^0$ to $y^{2k}$.  For $l<k$, the coefficient of $y^{2(k-l)}$ in $\alpha(\beta(y))$ is
$$
a_{2(k-l)+1}x^{\left(1-\frac{2k+1}{2k-1}\right)l-\frac{2k+1}{2k-1}}(2(k-l)+1) + a_{2(k-l)-1}x^{\left(1-\frac{2k+1}{2k-1}\right)l - \frac{2k+1}{2k-1}}\left(1-\tfrac{2k+1}{2k-1}\right)(l+1).
$$
By definition,
$$
a_{2(k-l)-1} = \left(-\tfrac{2k+1}{2k-1}a_{2(k-l)} - (2(k-l)+1)a_{2(k-l)+1}\right)\left(\left(1-\tfrac{2k+1}{2k-1}\right)(l+1)\right)^{-1}.
$$
Hence, the coefficient of $y^{2(k-l)}$ in $\alpha(\beta(y))$ is~\eqref{eq:y2kl}.  
The coefficient of $y^0$ in $\alpha(\beta(y))$ is \[a_1x^{\left(1-\frac{2k+1}{2k-1}\right)k-\frac{2k+1}{2k-1}}.\]  It remains to show that 
\begin{equation}\label{eq:a1a0}
a_1 = -\tfrac{2k+1}{2k-1}a_0.
\end{equation}
This is an immediate consequence of the following lemma.

\begin{lemma}\label{induction} In the notation of Lemma~\ref{lem:positive},
for all $l$, $0 \leqslant l \leqslant k$, $$\tfrac{2k+1}{2k-1}a_{2(k-l)} = \tfrac{2(k-l)+1}{2(k-l)-1}a_{2(k-l)+1}.$$
\end{lemma}
\begin{proof}
We proceed by induction on $l$.  The base case $l = 0$ is immediate, since by definition $a_{2k} = a_{2k+1}$.
For the inductive hypothesis, fix $l < k$ and assume $$\tfrac{2k+1}{2k-1}a_{2(k-l)} = \tfrac{2(k-l)+1}{2(k-l)-1}a_{2(k-l)+1}.$$  We want to show that \begin{equation}\label{wts}\tfrac{2k+1}{2k-1}a_{2(k-l)-2} = \tfrac{2(k-l)-1}{2(k-l)-3}a_{2(k-l)-1}.\end{equation}
The left-hand side of \eqref{wts} is, by the definition of $a_{2(k-l)-2}$,
$$
\tfrac{2k+1}{2k-1}\cdot\frac{\left(a_{2(k-l)-1}-2(k-l)a_{2(k-l)}\right)}{\left(1-\tfrac{2k+1}{2k-1}\right)(l+1)+1}.
$$
By the definition of $a_{2(k-l)-1}$, this equals
$$
\tfrac{2k+1}{2k-1}\cdot\frac{\left(\frac{-\tfrac{2k+1}{2k-1}a_{2(k-l)}-(2(k-l)+1)a_{2(k-l)+1}}{\left(1-\tfrac{2k+1}{2k-1}\right)(l+1)}-2(k-l)a_{2(k-l)}\right)}{\left(1-\tfrac{2k+1}{2k-1}\right)(l+1)+1}.
$$
By the inductive hypothesis, this is equal to
\begin{equation}\label{LHS}
\frac{\left(\frac{-\tfrac{2k+1}{2k-1}\cdot\tfrac{2(k-l)+1}{2(k-l)-1}-\tfrac{2k+1}{2k-1}(2(k-l)+1)}{\left(1-\tfrac{2k+1}{2k-1}\right)(l+1)}-2(k-l)\tfrac{2(k-l)+1}{2(k-l)-1}\right)}{\left(1-\tfrac{2k+1}{2k-1}\right)(l+1)+1}\cdot a_{2(k-l)+1}.
\end{equation}

The right-hand side of \eqref{wts} is, using the definition of $a_{2(k-l)-1}$,
$$
\tfrac{2(k-l)-1}{2(k-l)-3}\cdot\frac{-\tfrac{2k+1}{2k-1}a_{2(k-l)}-(2(k-l)+1)a_{2(k-l)+1}}{\left(1-\tfrac{2k+1}{2k-1}\right)(l+1)}.
$$
By the inductive hypothesis, this equals
$$
\tfrac{2(k-l)-1}{2(k-l)-3}\frac{-\tfrac{2(k-l)+1}{2(k-l)-1} - (2(k-l)+1)}{\left(1-\frac{2k+1}{2k-1}\right)(l+1)}\cdot a_{2(k-l)+1},
$$
which is equal to \eqref{LHS}, as a computation shows.
\end{proof}

By letting $l = k$ in Lemma~\ref{induction}, we see that~\eqref{eq:a1a0} holds.
\end{proof}

\begin{lemma}\label{lem:07}
For every
\begin{itemize}
\item positive integer $t$, 
\item $h \in K[x^{1/t}, x^{-1/t}] \backslash \{0\}$,
\end{itemize}
if there exists a $K$-derivation $\beta$ on $K[x^{1/t},x^{-1/t},y]$ such that
\begin{itemize}
\item $\beta$ commutes with the $K$-derivation 
$$\alpha\begin{pmatrix}x \\ y\end{pmatrix} = \begin{pmatrix}y \\ h\end{pmatrix}$$ on $K[x^{-1/t},x^{1/t},y]$ and 
\item $\beta$ is of the form 
$$
\beta\begin{pmatrix}x \\ y\end{pmatrix} = \begin{pmatrix}c_{m-1}y^{m-1}+c_{m-3}y^{m-3}+\ldots+c_0 \\ d_my^m+d_{m-2}y^{m-2}+\ldots+d_1y\end{pmatrix},
$$
where $m \geqslant 3$ is odd, $c_i$, $d_i \in K[x^{-1/t},x^{1/t}]$, and $d_m \neq 0$,
\end{itemize}
then \[N := \deg h \in S\cup T,\] where
\begin{gather*}S = \{1\} \cup \left\{-\tfrac{2k+1}{2k-1} \: \big| \: k \in \Z, 1\leqslant k \leqslant \tfrac{m-1}{2}\right\},\\T = \{-1\} \cup \left\{-\tfrac{k}{k-1} \: \big| \: k \in \Z, k \geqslant 2\right\}.
\end{gather*}
\end{lemma}

\begin{proof}
Fix $t \in \Z^{\geq 1}$.  Fix $h \in K[x^{-1/t},x^{1/t}]\backslash \{0\}$ and hence $\alpha$ of the form stated in the lemma.  Let $N = \deg h$ and assume $N \not\in T$.  Suppose a $K$-derivation $\beta$ satisfying the properties stated in the lemma exists and let $m$ be the least odd integer greater than or equal to $3$ such that there exists such a $\beta$.  
By Lemma~\ref{lem:polynomial}, $P_m(N)=0$, and $P_m$ has at most $\frac{m+1}{2}$ zeros.  We show that these zeros are exactly the elements of $S$.

We show that $P_m(1) = 0$.  The $K$-derivations 
$$
\partial_1\begin{pmatrix}x \\ y\end{pmatrix} = \begin{pmatrix}y \\ x\end{pmatrix} \quad \text{ and } \quad \partial_2\begin{pmatrix}x \\ y\end{pmatrix} = \begin{pmatrix}x \\ y\end{pmatrix}
$$
on $K[x,x^{-1},y]$ commute and $\partial_1$ has the form of $\alpha$ in the statement of Lemma~\ref{lem:polynomial}.  The polynomial $r := y^2-x^2$ is a first integral of $\partial_1$, and so $r^{(m-1)/2}\partial_2$ is a $K$-derivation commuting with $\partial_1$ of the form of $\beta$ in the statement of Lemma~\ref{lem:polynomial}.  Therefore, by Lemma~\ref{lem:polynomial}, $P_m(1) = 0$.

We show that, for all $k$, $1 \leqslant k \leqslant \frac{m-1}{2}$,
\begin{equation}\label{eq:zeroofPm}
P_m\big(-\tfrac{2k+1}{2k-1}\big) = 0.
\end{equation}Fix $k$.  Let $K$-derivations $\partial_1$ and $\partial_2$ on $K[x^{\frac{1}{2k-1}},x^{-\frac{1}{2k-1}},y]$ be defined as $\alpha$ and $\beta$ are in Lemma~\ref{lem:positive}.  Now \[r = y^2 + 2\left(\tfrac{2k-1}{2}\right)x^{-2/(2k-1)}\] is a first integral of $\partial_1$.  Note that $\deg_y\partial_2(y) = 2k+1$.  Now $r^{(m-(2k+1))/2}\partial_2$ is a derivation commuting with $\partial_1$ of the form of $\beta$ of Lemma~\ref{lem:polynomial}.  Hence, we have~\eqref{eq:zeroofPm}.

The set $S$ consists of $\frac{m+1}{2}$ elements, and we have shown that each is a zero of $P_m$, which is nonzero of degree at most $\frac{m+1}{2}$.  It follows that $S$ is exactly the zero set of $P_m$.
\end{proof}

\begin{corollary}\label{cor:pnotodd}
It is impossible that $p$ is an odd integer greater than or equal to $3$.
\end{corollary}
\begin{proof}
Suppose $p \geqslant 3$ and $p$ is odd.  Let $N = \deg f$.  Recall that $p = \deg_y\gamma_2$.  Consider Lemma~\ref{lem:07}.  Since the extensions of $\delta$ and $\gamma_2$ to $K$-derivations on $K[x,x^{-1},y]$ are of the forms of $\alpha$ and $\beta$, it follows that $N \in S \cup T$.  Since $N$ is assumed to be an integer greater than or equal to $2$, this is a contradiction.
\end{proof}

This finishes the proof of Theorem~\ref{thm:main}.
\end{proof}

\section*{Acknowledgements} This work was partially supported by the NSF grants CCF-0952591, DMS-1700336,
 DMS-1606334,
by the NSA grant \#H98230-15-1-0245, by
CUNY CIRG \#2248, and by PSC-CUNY grants \#69827-00 47 and \#60456-00 48.
The authors are grateful to the CCiS at CUNY Queens College for the computational resources.
\bibliographystyle{abbrvnat}
\bibliography{bibdata}
\end{document}